\newcommand{\bx}{{\bf x}}
\newcommand{\bw}{{\bf w}}
\newcommand{\bv}{{\mathbf v}}
\def\bg{{\bf g}}
\def\bn{{\bf n}}
\def\3bar{{|\hspace{-.02in}|\hspace{-.02in}|}}
\def\ljump{{[\![}}
\def\rjump{{]\!]}}
\def\jump#1{{ \left[ \!\!\left[ #1 \right]\!\!\right] }}
\numberwithin{equation}{section}
\newtheorem{theorem}{Theorem}[section] %
\newtheorem{definition}{Definition}[section] %
\newtheorem{example}{Example}[section]
\newtheorem{lemma}{Lemma}[section]
\begin{document}
	\title{{\color{black}The Immersed Skeletal Finite Element Method for Elliptic Interface Problems}}

\author[mymainaddress]{Lin Yang}
\ead{yanglin25@amss.ac.cn}
\author[mysecondaddress]{Qilong Zhai\corref{mycorrespondingauthor}}
\cortext[mycorrespondingauthor]{Corresponding author}
\ead{zhaiql@jlu.edu.cn}

\address[mymainaddress]{Academy of Mathematics and Systems Science, Chinese Academy of Sciences, Beijing 100190, P. R. China} 
\address[mysecondaddress]{School of Mathematics, Jilin University, Changchun 130012, Jilin, P. R. China}

	\begin{abstract}
		In this paper, we present a new immersed finite element scheme for solving elliptic interface problems on unfitted meshes by combining the {\color{black}skeletal finite element method (FEM)} with the standard {\color{black}FEM. The skeletal FEM is used for the interface elements. In other words, we take piecewise functions as the unknowns inside the interface element and on its boundary.} We employ the immersed finite element functions as interior functions that precisely satisfy the interface conditions. On the interface edges, we define two boundary functions to capture the discontinuity. The {Lagrange element} is used for the non-interface elements. The proposed scheme is simple and flexible. We prove that this scheme achieves optimal convergence orders in both the $H^1$ norm and $L^2$ norm. {Numerical} experiments are presented to demonstrate the efficiency and accuracy of the proposed method.
	\end{abstract}

	\begin{keyword}
		Immersed finite element method; Skeletal finite element method; Higher degree finite element, Interface problems, Unfitted meshes.
		
		\MSC[2008] 35R05 \sep 35B45 \sep 65N15 \sep 65N22 \sep 65N30
	\end{keyword}

	\maketitle

\section{Introduction}
Assume that the domain $\Omega$  is a bounded domain in $\mathbb{R}^2$. And the domain is divided by an interface $\Gamma$ into two subdomains, $\Omega_1$  and $\Omega_2$. That is $\Gamma = \partial \Omega_{1} \cap  \partial \Omega_{2}$.  In this paper, we consider the following second order elliptic interface problems:
\begin{align}
	-\nabla \cdot (\beta \nabla u) &= f, \quad \text{in} \ \Omega_1 \cup \Omega_2, \label{Elliptic_problem}\\
	u &= g , \quad \text{on} \ \partial \Omega, \label{Dirichlet_Boundary}
\end{align}
where $\beta$ is a piecewise constant
\begin{eqnarray*}
	\beta(x,y)=\left\{
	\begin{array}{rcl}
		\beta_1, &  (x,y) \in \Omega_1, \\
		\beta_2, & (x,y) \in \Omega_2.
	\end{array}\right.
\end{eqnarray*}
And we assume $\beta_2 \geqslant \beta_1 > 0$. Set $u_1=u|_{\Omega_{1}}$ and $u_2=u|_{\Omega_{2}}$. The following conditions are satisfied on the interface $\Gamma$:
\begin{align}
	\ljump u \rjump_{\Gamma}&:= u_1 - u_2 =0, \quad\quad\quad\quad\quad\quad\quad\,\,\, \text{on} \ \Gamma, \label{Interface_condition_1}\\
	\ljump \beta \nabla u \cdot \bn \rjump_{\Gamma} &:= \beta_1 \nabla u_1 \cdot \bn - \beta_2 \nabla u_2 \cdot \bn = 0, \quad \text{on}\ \Gamma,\label{Interface_condition_2}
\end{align}
where $\bn$ is the unit outward normal vector on the interface $\Gamma$ pointing from $\Omega_1$ into $\Omega_2$.

For interface problems, numerical methods can be classified into two types based on the relationship between the meshes and the interface location: methods on fitted meshes and unfitted meshes. On fitted meshes, we use numerical methods for {non-interface problems} to solve the interface problems, and it is easy to prove the optimal error estimates \cite{FEM_Interface_1,FEM_Interface_2,FEM_Interface_3,FEM_fitted_2010_2}. However, when solving the moving interface problem on fitted meshes,
the meshes have to be updated to capture the evolving interface, significantly increasing the computational cost. Unfitted meshes are used to avoid this issue. For numerical methods on unfitted meshes, one method is to incorporate interface conditions and extended jump conditions directly into the numerical scheme, such as the immersed interface method \cite{IIM2,li2006immersed} and the CutFEM method \cite{CutFEM1,CutFEM2}, etc. Another method is to modify shape function on the interface elements to satisfy the interface conditions and extended jump conditions, such as the multi-scale finite element method \cite{Multi_scale_FEM1,Multi_scale_FEM2}, the partition of unity method \cite{PIU1,PIU2}, the extended finite element method \cite{ExtendedFEM1,ExtendedFEM2}, generalized finite element method \cite{GFEM1,GFEM2,GFEM3,GFEM4} and immersed finite element method to be used in this article. 

The immersed finite element (IFE) method uses {standard} finite element on non-interface elements and IFE functions on the interface elements. The IFE functions are constructed to satisfy jump conditions. Based on different choices of degrees of freedom, researchers proposed various lowest-order IFE functions in \cite{IFEM_Loworder9,IFEM_Loworder2,IFEM_Loworder11,IFEM_Loworder5,IFEM_Loworder3,IFEM_Loworder1,IFEM_Loworder4,Partially_penalized_IFEM_2015,IFEM_Loworder6,IFEM_Loworder7,IFEM_Loworder10}. When constructing the high order IFE functions, we {can} impose one of the following two sets of extended jump conditions \cite{least_square2017} (for integer $k \geqslant 2$) to ensure the uniqueness of IFE functions:
\begin{itemize}
	\item Laplacian extended jump conditions:
	\begin{eqnarray}
		\jump{	\beta \frac{\partial^{j-2} \Delta u }{\partial \bn^{j-2}}
		}_{\Gamma} =0, \, j=2,3, \cdots, k. \label{Laplacian_extend_interface_conditions}
	\end{eqnarray}
	\item Normal extended jump conditions:
	\begin{eqnarray}
		\jump{	\beta \frac{\partial^{j} u }{\partial \bn^{j}}
		}_{\Gamma} =0, \, j=2,3, \cdots, k. \label{Normal_extend_interface_conditions_2}
	\end{eqnarray}
\end{itemize}
For the curved interface, {\cite{IFEM_Loworder9,IFEM_Loworder11,IFEM_Loworder5,IFEM_Loworder3,Partially_penalized_IFEM_2015,IFEM_Loworder7}} used straight segments to approximate curved interfaces. However, the geometric errors limits the accuracy of the high order IFE functions. Therefore, researchers construct these IFE functions based on actual interface. Based on the above two extended jump conditions on the actual interface,  \cite{least_square2017} constructed the high order IFE functions by using the least squares method. And the interpolation estimates of the IFE function were verified by numerical experiments. In \cite{Cauchy_mapping_2019}, the authors constructed the IFE functions that satisfy the interface conditions and Laplacian extended jump conditions by solving a local Cauchy problem on the interface elements. They proved the existence and uniqueness of IFE functions. And the optimal convergence orders of interpolation estimates and error estimate were obtained.
Then, \cite{Enrich_nonhomogeneous_2023} applied this method to solve the second order elliptic interface problems with non-homogeneous jump condition.  In the above papers, the jump conditions are approximately satisfied. In \cite{Frenet_Serret_2024}, by using the Frenet-Serret frame of the interface, the authors mapped interface to a straight line through a non-affine transformation, adjusting the interface conditions accordingly. Thus, they  constructed IFE functions that precisely satisfy the interface conditions based on the geometric properties of the interface.  The authors also proved that the interpolation estimates of these IFE functions achieve optimal convergence orders. {\cite{IFE_2025} developed the critical trace inequality for the IFE functions and presented optimal error estimates in both $L^2$ and energy norms.}

Another challenge of the IFE method is developing a numerical scheme with optimal convergence orders. Since the IFE functions are independently constructed on each element, discontinuities arise between the elements, directly affecting the accuracy of the numerical scheme. If this discontinuity is not properly addressed, optimal convergence orders may not be achieved. To address this issue, the authors in \cite{Partially_penalized_IFEM_2015} introduced penalty terms on the interface edges to {reduce} the discontinuity of the IFE functions, thus developing a partially penalized IFE method. To avoid adding penalty terms to numerical scheme,  \cite{NCFEM_2018,NCFEM_2013} used the integral values on the interface edges as degrees of freedom, which reduce the discontinuity of IFE functions between elements. However, in \cite{NCFEM_2023}, the authors proved that when the tangential derivative of the exact solution and jump in the coefficient are nonzero on the interface, a numerical scheme without penalty terms cannot achieve optimal convergence order. For this problem, they introduced a lifting operator on the interface edges and developed a parameter-free nonconforming finite element method to ensure optimal convergence order.

In this paper, we employ {\color{black}the skeletal FEM} for the interface elements. {\color{black}The skeletal FEM is the hybridized discontinuous Galerkin FEM with the Lehrenfeld-Sch\"{o}berl (LS) stabilization. The key feature of this method takes the discontinuous piecewise polynomials in the cell and edges as the unknowns. In other words, the approximate function includes two parts: interior functions and boundary functions.} And the boundary functions may not necessarily be related to the trace of interior function. In our work, {referring to the constructed IFE function in \cite{Frenet_Serret_2024}, the interior function for the interface elements is defined.} To address the discontinuity along the element boundaries, we define two boundary functions on the edges that intersect the interface. When the standard finite element method is used to solve the second order elliptic problem, the optimal convergence orders were obtained. And this method is easy to implement. Therefore, the standard finite element method is used for non-interface elements. We propose a new numerical scheme for this interface problem that combines the flexibility of the {\color{black}skeletal} method with the simplicity of standard finite element method, {\color{black} while avoiding the computational challenges associated with excessive degrees of freedom}. And we prove that the error achieves optimal convergence orders in both the $H^1$ norm and $L^2$ norm.

This paper is organized as follows. In Section 2, we focus on the geometric properties of the interface functions. We construct the {\color{black}skeletal} immersed finite element function in Section 3. Section 4 defines the global finite element space and {\color{black}corresponding} differential operators, and present the corresponding numerical scheme. We also analyze the existence and uniqueness of the proposed numerical scheme. In Sections 5, {\color{black}by the projection property of the gradient operator}, the error equation is presented. In Section 6 and Section 7, we demonstrate that the errors achieve optimal convergence orders in the $H^1$ norm and $L^2$ norm. Finally, in Section 8, we provide several numerical examples to validate the effectiveness of our method.

\section{Preliminaries}
For every measurable subset $\Omega$, we introduce the following broken Sobolev space:
\begin{eqnarray*}
	PH^k(\Omega)=\{u: u|_{\Omega_1} \in H^k(\Omega_1)\, \text{and}\,u|_{\Omega_2} \in H^k(\Omega_2)
	\}.
\end{eqnarray*}
The norms and semi-norms used on $PH^k(\Omega)$ are
$$
\|\cdot\|_{k,\Omega}^2=\|\cdot\|_{k,\Omega_1}^2+\|\cdot\|_{k,\Omega_2}^2.
$$

Let $\mathcal{T}_h$ be the shape-regular {triangular meshes} of the domain $\Omega$. And the meshes do not align with the interface. Let $\mathcal{T}_h^n$ denote the set of non-interface elements, and $\mathcal{T}_h^I$ represent the set of interface elements, which intersect with the interface $\Gamma$. That is, $\mathcal{T}_h=\mathcal{T}_h^n \cup \mathcal{T}_h^I$ (see Figure \ref{unfitted_meshes_IEI}). In addition to the shape-regular conditions, the interface elements also need to satisfy the following conditions:
\begin{itemize}
	\item The interface $\Gamma$ can intersect an edge of an interface element at most twice, except when the edge is part of the interface;
	\item If the interface $\Gamma$ intersects the edges of an interface element at more than two points, then these points must lie on different edges of the elements;
	\item The interface $\Gamma$ within the interface element is $C^2$.
\end{itemize}

For $T \in \mathcal{T}_h$, define the diameter and area of $T$ as $h_T$ and $|T|$, respectively. Set $h = \max_{T \in \mathcal{T}_h} h_T$, which represents the maximum diameter of the partitions $\mathcal{T}_h$. Denote by $\mathcal{E}_h$ the set of all edges of the interface elements. Define $\mathcal{E}_h^n$ as the set of edges where the interface elements intersect the non-interface elements. This set includes all edges shared by both interface elements and non-interface elements. The set of edges intersecting the interface is defined as $\mathcal{E}_h^I$. For the integer $k \geqslant 1$, define $Q_k(T)$ as the space of tensor product polynomials on $T$ of degree up to $k$ in each variable. Denote by $P_k(e)$ the space of polynomials of degree up to $k$ on the edge $e \in \mathcal{E}_h$. 
\begin{figure}[H]
	\centering
	\setlength{\unitlength}{1bp}%
	\begin{picture}(150, 180)(0,0)
		\put(0,0){\includegraphics[scale=0.4]{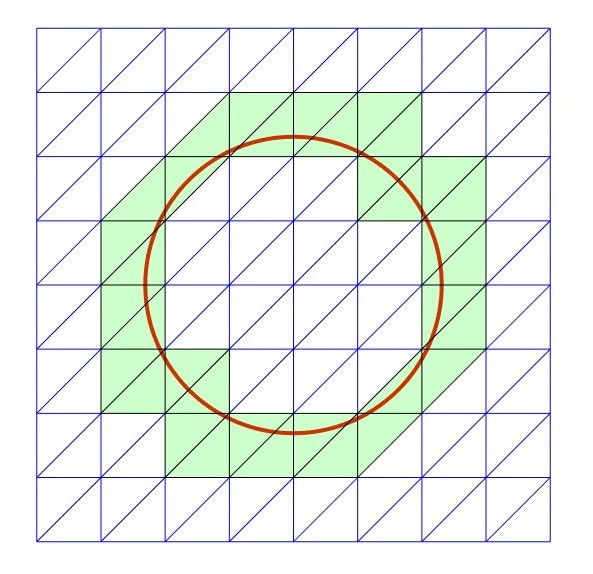}}
		\put(55.00,32.00){\fontsize{12.23}{15.07}\selectfont $\mathcal{T}_h^I$}
		\put(145.00,130.00){\fontsize{12.23}{15.07}\selectfont $\mathcal{T}_h^n$}	\put(132.00,100.00){\fontsize{12.23}{15.07}\selectfont $\Gamma$}
		\put(80.00,80.00){\fontsize{12.23}{15.07}\selectfont $\Omega_1$}
		\put(145.00,30.00){\fontsize{12.23}{15.07}\selectfont $\Omega_2$}
	\end{picture}%
	\caption{Unfitted meshes.}
	\label{unfitted_meshes_IEI}
\end{figure}

{The remainder of this section closely follows the presentation in \cite{Frenet_Serret_2024}. For the reader's convenience and ease of reference in subsequent sections, we reproduce the relevant material here.}

We assume that the parametrization of the interface $\Gamma$ is as follows:
$$
\bg(\xi)=(g_1(\xi),g_2(\xi))^T,\, [\xi_s, \xi_e] \rightarrow \Gamma,
$$ 
and is regular in the sense that $\bg'(\xi) \neq 0$ for all $\xi \in [\xi_s, \xi_e]$. In order to get the high order estimates, we assume $g_i(\xi) \in C^{k+2}(\xi_s,\xi_e)$, $i=1,\,2$. For the point on the interface $\Gamma$, the unit tangent vector $\bm{\tau}(\xi)$ is given by 
$$
\bm{\tau}(\xi)=\begin{pmatrix}
	\tau_1(\xi)\\
	\tau_2(\xi)
\end{pmatrix}=\frac{1}{\|\bg'(\xi)\|} \bg'(\xi),
$$
the unit normal vector is defined as
$$
\bn(\xi)=\begin{pmatrix}
	n_1(\xi)\\
	n_2(\xi)
\end{pmatrix}=Q\bm{\tau}(\xi),\quad Q=\begin{pmatrix}
	0 & 1\\
	-1 & 0
\end{pmatrix},
$$
and the signed curvature $\kappa(\xi)$ is denoted by
$$
\kappa(\xi)=\frac{g_1'(\xi)g_2''(\xi)-g_2'(\xi)g_1''(\xi)}{\|\bg'(\xi)\|^3}.
$$

In the neighborhood of the interface $\Gamma$, there exist curves that are locally parallel to $\Gamma$.  According to the Frenet apparatus, a curve parallel to $\Gamma$ with an offset distance $\eta$ has the following parametric form \cite{Frenet_Serret_2024}:
\begin{eqnarray*}
	\bx(\eta, \xi)=\begin{pmatrix}
		x(\eta, \xi)\\
		y(\eta, \xi)
	\end{pmatrix}
	=P_{\Gamma}(\eta, \xi) = \bg(\xi)+\eta \bn(\xi),\, \xi \in [\xi_s, \xi_e].
\end{eqnarray*} 

We assume that the interface $\Gamma$ has a tabular neighborhood with a half-band width $\varepsilon >0$, denoted by  
$$
N_{\Gamma}(\varepsilon)=P_{\Gamma}([-\varepsilon,\varepsilon] \times [\xi_s, \xi_e]).
$$
Obviously, for a mesh $\mathcal{T}_h$ with $h$, all the interface elements are inside the tubular neighborhood $N_{\Gamma}(h)$ of $\Gamma$. Hence, we assume that the mesh size $h$ is small enough such that $N_{\Gamma}(h) \subset N_{\Gamma}(\varepsilon)$ or simply $h < \varepsilon$. By \cite{Frenet_Serret_2024},  the mapping $P_{\Gamma}$ is bijective and has an inverse mapping defined as  $R_{\Gamma}: N_{\Gamma}(\varepsilon) \rightarrow [-\varepsilon,\varepsilon] \times [\xi_s, \xi_e]$, i.e.
$$
\begin{pmatrix}
	\eta(x,y)\\\xi(x,y)
\end{pmatrix}=R_{\Gamma}(x,y)=P^{-1}_{\Gamma}(x,y)\in [-\varepsilon,\varepsilon] \times [\xi_s, \xi_e], \, (x,y) \in N_{\Gamma}(\varepsilon).
$$

We start from recalling the well-known Frenet-Serret formulas \cite{gray2006modern} for the tangent vector and the normal vector:
\begin{eqnarray*}
	\bm{\tau}'(\xi)=-\kappa(\xi)\|\bg'(\xi)\|\bn(\xi),\quad
	\bn'(\xi)=\kappa(\xi)\|\bg'(\xi)\|\bm{\tau}(\xi).
\end{eqnarray*}
Using {these formulas}, the Jacobian matrix $\widehat{D}P_{\Gamma}(\eta, \xi)$ of the Frenet transformation $P_{\Gamma}(\eta, \xi)$ is {as follows:}
\begin{eqnarray*}
	\widehat{D}P_{\Gamma}(\eta, \xi)=\begin{pmatrix}
		\bn(\xi) & \bg'(\xi)+\eta \bn'(\xi)
	\end{pmatrix}=\begin{pmatrix}
		\bn(\xi) & \|\bg'(\xi)\|(1+\eta \kappa(\xi))\bm{\tau}(\xi)
	\end{pmatrix}.
\end{eqnarray*}
By the inverse function theorem, we have the following formula for the Jacobian of the inverse Frenet transformation $R_{\Gamma}(x,y)$:
\begin{eqnarray*}
	DR_{\Gamma}(x,y)=\begin{pmatrix}
		\bn(\xi)^T \\
		\|\bg'(\xi)\|^{-1} \psi(\eta,\xi)\bm{\tau}(\xi)^T
	\end{pmatrix}, \quad \psi(\eta,\xi)=(1+\eta \kappa(\xi))^{-1}.
\end{eqnarray*}

\section{The Immersed {\color{black}skeletal} Finite Element Space}
For the interface element $T \in \mathcal{T}_h^I$, we use the function $v_h=\{v_0, v_b\}$ to {discretize} the exact function. We construct the IFE function that strictly satisfy the interface conditions as the interior function $v_0$. The main idea of constructing this IFE function is found in \cite {Frenet_Serret_2024}. 

{For the interface element $T=\triangle P_1P_2P_3$, we construct a fictitious element $T_F$ to cover $T$. For each vertex $P_i$, $1 \leqslant i \leqslant 3$, there exists a unique $\xi_i \in [\xi_s, \xi_e]$ such that 
	\begin{align*}
		\|P_i - \bg(\xi_i)\|={\rm{dist}}(\Gamma, P_i).
	\end{align*}
	Therefore, we get two parameters $a_T$, $b_T \in [\xi_s, \xi_e]$ such that
	\begin{align*}
		a_T=\min(\xi_1,\xi_2,\xi_3), \quad b_T=\max(\xi_1,\xi_2,\xi_3).
	\end{align*}
	Then, we construct a rectangle $\widehat{T}_F=[-h,h] \times [a_T,b_T]$ and {curved trapezoid} $T_F=P_{\Gamma}(\widehat{T}_F)$. Define
	$$
	\widehat{T}_F^{1}=\{(\eta, \xi) \in \widehat{T}_F, \eta <0 \}\quad \text{and} \quad \widehat{T}_F^{2}=\{(\eta, \xi) \in \widehat{T}_F, \eta >0 \}
	$$
	as two subelements of the element $\widehat{T}_F$. For the interface element $T$, denote $\widehat{T}=R_{\Gamma}(T)$ and $\widehat{\Gamma}_{T_F}=R_{\Gamma}(\Gamma_{T_F})$, where $\Gamma_{T_F}=\Gamma \cap T_F$. It is worth noting that this transformation converts {interface segment} $\Gamma_{T_F}$ into {line segment} $\widehat{\Gamma}_{T_F}$, which directly avoids the geometric errors (see Figure \ref{inteface_element_reference_transform_IEI}).	
}
\begin{figure}
	\centering
	\setlength{\unitlength}{1bp}%
	\begin{picture}(365.01, 181.80)(0,0)
		\put(0,0){\includegraphics{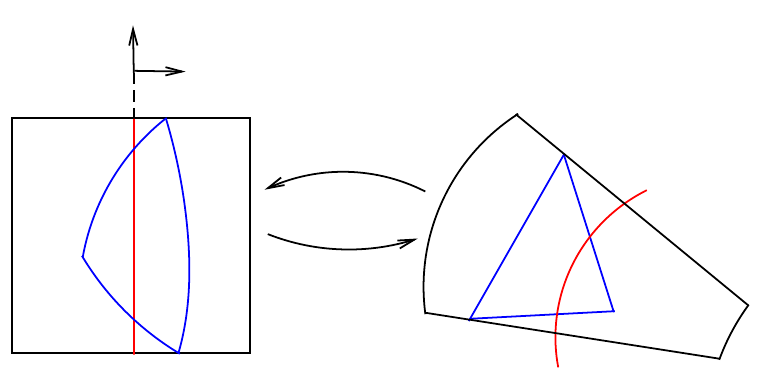}}
		\put(52.97,165.02){\fontsize{14.23}{17.07}\selectfont $\xi$}
		\put(89.39,143.58){\fontsize{14.23}{17.07}\selectfont $\eta$}
		\put(77.93,25.96){\fontsize{14.23}{17.07}\selectfont $\widehat{T}$}
		\put(11.26,108.91){\fontsize{14.23}{17.07}\selectfont $\widehat{T}^1_F$}
		\put(102.38,108.91){\fontsize{14.23}{17.07}\selectfont $\widehat{T}^2_F$}
		\put(102.95,19.40){\fontsize{14.23}{17.07}\selectfont $\widehat{T}_F$}
		\put(67.70,68.49){\fontsize{14.23}{17.07}\selectfont $\widehat{\Gamma}_{T_F}$}
		\put(257.64,52.37){\fontsize{14.23}{17.07}\selectfont $T$}
		\put(316.28,90.23){\fontsize{14.23}{17.07}\selectfont $\Gamma$}
		\put(327.79,19.33){\fontsize{14.23}{17.07}\selectfont $T_F$}
		\put(161.08,110.04){\fontsize{14.23}{17.07}\selectfont $R_{\Gamma}$}
		\put(157.41,45.99){\fontsize{14.23}{17.07}\selectfont $P_{\Gamma}$}
	\end{picture}%
	\caption{The transformation between the interface element  $T$  and $\widehat{T}$. }
	\label{inteface_element_reference_transform_IEI}
\end{figure}

We construct the IFE functions in $\widehat{T}_F$. First, {according to \cite {Frenet_Serret_2024}, we use} the mapping $R_{\Gamma}$ to  convert the interface conditions (\ref{Interface_condition_1})-(\ref {Laplacian_extend_interface_conditions}) to the conditions on $\widehat{\Gamma}_{T_F}$, i.e.
\begin{eqnarray}\label{inteface_condition_reference_IEI}
	\ljump \hat{u} \rjump_{\widehat{\Gamma}_{T_F}}=0,\quad 	\jump{\widehat{\beta}\dfrac{\partial \widehat{u}}{\partial \eta}}_{\widehat{\Gamma}_{T_F}}=0,\quad \jump{\widehat{\beta} \frac{\partial^j}{\partial \eta^j}\mathcal{L}(\widehat{u})}_{\widehat{\Gamma}_{T_F}}=0, \quad j=0,1,2,\cdots,k-2,
\end{eqnarray}
where
\begin{align*}
	& \widehat{u}(\eta, \xi)=u \circ P_{\Gamma}(\eta, \xi),\quad \widehat{\beta}=\beta \circ P_{\Gamma}(\eta, \xi),\\
	&\mathcal{L}(\widehat{u}(\eta, \xi))=\dfrac{\partial^2 \widehat{u}}{\partial \eta^2}(\eta, \xi)+J_0(\eta, \xi)\dfrac{\partial^2 \widehat{u}}{\partial \xi^2}(\eta, \xi)+J_1(\eta, \xi)\dfrac{\partial \widehat{u}}{\partial \eta}(\eta, \xi)+J_2(\eta, \xi)\dfrac{\partial \widehat{u}}{\partial \xi}(\eta, \xi),\\
	&J_0(\eta, \xi)=\left( \frac{\psi(\eta, \xi)}{\|\bg^{\prime}(\xi)\|} \right)^2,\quad J_1(\eta, \xi)=\kappa(\xi)\psi(\eta, \xi),\\
	&J_2(\eta, \xi)=-\left(\frac{\psi(\eta, \xi)}{\|\bg^{\prime}(\xi)\|} \right)^2\left(
	\eta \kappa^{\prime}(\xi)\psi(\eta, \xi)+\frac{\bg^{\prime}(\xi)\cdot \bg^{\prime\prime}(\xi)}{\|\bg^{\prime}(\xi)\|^2}
	\right).
\end{align*}
{For any integer $k \geqslant 1$, we use $	\widehat{\mathcal{V}}_k(\widehat{T}_F)$ to define the space of the IFE functions in $\widehat{T}_F$. According to \cite[Lemma 3]{Frenet_Serret_2024}, these piecewise functions 
	$$
	\widehat{\phi}_{i,j}(\eta, \xi)=\dfrac{1}{\widehat{\beta}}\eta^j p_i(\xi),\quad 0 \leqslant i \leqslant k, \, 1 \leqslant j \leqslant k,
	$$
	are in the space $	\widehat{\mathcal{V}}_k(\widehat{T}_F)$, where $\{p_i(\xi)\}_{i=0}^k$ is a basis of the space $P_k(\widehat{\Gamma}_{T_F})$. The $P_k(\widehat{\Gamma}_{T_F})$ denotes the space of polynomials in the variable $\xi$ of degree up to $k$. By \cite[Lemma 4]{Frenet_Serret_2024}, for $\widehat{\phi}_{i,0}|_{\widehat{T}_F^1}=\widehat{\phi}_{i,0}^1(\eta, \xi)=p_i(\xi), 0 \leqslant i \leqslant k$, there exists a unique $\widehat{\phi}_{i,0}^2(\eta, \xi) \in Q_k(\widehat{T}_F^2)$ such that $\widehat{\phi}_{i,0}(\eta, \xi) \in \widehat{\mathcal{V}}_k(\widehat{T}_F)$. Therefore, we can construct $(k+1)^2$ functions that precisely satisfy the interface conditions (\ref{Interface_condition_1})-(\ref {Laplacian_extend_interface_conditions}) to form a basis for the space $	\widehat{\mathcal{V}}_k(\widehat{T}_F)$.} The definition of the space $	\widehat{\mathcal{V}}_k(\widehat{T}_F)$ is as follows:
\begin{align*}
	\widehat{\mathcal{V}}_k(\widehat{T}_F)=\left\{ 
	\widehat{\phi}: \widehat{T}_F \rightarrow \mathbb{R}, {\widehat{\phi} \in Q_k(\widehat{T}_F)}  \,\text{and satisfies}\, \eqref{inteface_condition_reference_IEI}
	\right\}.
\end{align*}
Therefore, the space of these IFE functions in $T$ is defined as
\begin{align*}
	\mathcal{V}_k(T)=\left\{ 
	\widehat{\phi} \circ R_{\Gamma}:\widehat{\phi} \in  \widehat{\mathcal{V}}_k(\widehat{T}_F)
	\right\}.
\end{align*}
For the detail, readers can refer to \cite {Frenet_Serret_2024}. The definition of the boundary function $v_b$ is as follows:
\begin{align*}
	W_b(e)=&\{v_b \in  P_{k-1}(e) \,\text{on} \, e \in \mathcal{E}_h \setminus \mathcal{E}_h^I,\\
	&v_b=\{v_{1b},v_{2b}\}, v_{1b} \in  P_{k-1}(e_1),  v_{2b} \in  P_{k-1}(e_2) \,\text{on}\, e \in \mathcal{E}_h^I
	\},
\end{align*} 
where the definitions of $e_1$ and $e_2$ is as shown as Figure \ref{interface_element_IEI}.
\begin{figure}[H]
	\centering
	\setlength{\unitlength}{1bp}%
	\begin{picture}(126.55, 181.73)(0,0)
		\put(0,0){\includegraphics{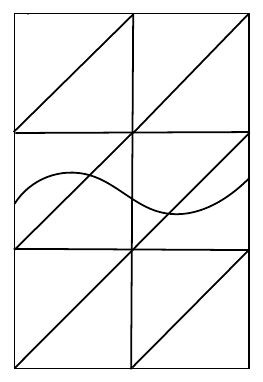}}
		\put(75.67,149.06){\fontsize{14.23}{17.07}\selectfont $\Omega_{1}$}
		\put(70.83,34.20){\fontsize{14.23}{17.07}\selectfont $\Omega_{2}$}
		\put(36.51,105.25){\fontsize{14.23}{17.07}\selectfont $e_1$}
		\put(32.12,72.91){\fontsize{14.23}{17.07}\selectfont $e_{2}$}		\put(85.12,67.91){\fontsize{14.23}{17.07}\selectfont $\Gamma$}
	\end{picture}
	\caption{The interface element $T \in \mathcal{T}_h^I$.}
	\label{interface_element_IEI}
\end{figure}
The {\color{black}skeletal finite element} space on the interface element $T \in \mathcal{T}_h^I$ is denoted by \begin{eqnarray*}
	W_h^I(T) = \{ v_h = \{v_0, v_b\}: v_0 \in \mathcal{V}_k(T),\, T \in \mathcal{T}_h^I,\, v_b \in W_b(e), e \in \mathcal{E}_h\}.
\end{eqnarray*}
Therefore, the global {\color{black}skeletal finite element} space is defined as follows:
\begin{eqnarray*}
	V_h^I=\{ v_h = \{v_0, v_b\}: v_h|_T \in W_h^I(T), \, \forall \, T \in \mathcal{T}_h^I \}.
\end{eqnarray*}

\section{The Immersed {\color{black}skeletal finite element} Scheme}
In this section, we apply the {\color{black}skeletal FEM} and the standard {\color{black}FEM} for the problems \eqref{Elliptic_problem}-\eqref{Laplacian_extend_interface_conditions} on unfitted meshes. We start by defining the finite element space and introducing the corresponding differential operators. Then, we present the proposed numerical scheme in detail.

For the non-interface elements, we define the following finite element space:
\begin{align*}
	V_h^n=\{ v_h \in H^1( \Omega \setminus \mathcal{T}_h^I): v_h|_T\in P_k(T), \, \forall \, T \in \mathcal{T}_h^n\}.
\end{align*}
For $T \in \mathcal{T}_h^n$, define $\Pi_h u$ as the Lagrange interpolate function of $u$ of degree up to $k$ in $V_h^n$. For $T \in \mathcal{T}_h^I$, denote by $Q_0$ the $L^2$ projection operator from $L^2(T)$ into $\mathcal{V}_k(T)$. On the edge $e \in \mathcal{E}_h$, we define the following $L^2$ projection operators:
{
	\begin{eqnarray*}
		Q_b u =\left\{\begin{array}{cl}
			Q_b^e u,  & \, e \in \mathcal{E}_h \setminus \mathcal{E}_h^I,\\
			\{Q_b^1 u_1, Q_b^2 u_2 \}, & \, e \in \mathcal{E}_h^I,\\
		\end{array}
		\right.
	\end{eqnarray*}
	and 
	\begin{eqnarray*}
		Q_\delta u =\left\{\begin{array}{cl}
			Q_b^e(\Pi_h u), &\, e \in \mathcal{E}_h^n,\\
			\{Q_b^1 u_1, Q_b^2 u_2 \},  &\, e \in \mathcal{E}_h^I,\\
			Q_b^eu,&\, e \in \mathcal{E}_h \setminus (\mathcal{E}_h^n \cup \mathcal{E}_h^I),
		\end{array}
		\right.
	\end{eqnarray*}
	where $Q_b^e$ is the $L^2$ projection operator from $L^2(e)$ into $P_{k-1}(e)$. For the interface edge $e=e_1 \cup e_2$, $Q_b^i$ is the $L^2$ projection operator from $L^2(e_i)$ into $P_{k-1}(e_i)$, $i=1,2$.}
By combining the space $V_h^n$ with the  {\color{black}skeletal finite element} space, the global finite element space is defined as follows:
\begin{align*}
	V_{h} &=\{  v_h: v_h|_{\mathcal{T}_h^n} \in V_h^n, v_h|_{\mathcal{T}_h^I} \in V_h^I, \, v_b =Q_b(v_h|_e) \, \text{on} \, e \in \mathcal{E}_h^n  \},\\
	V_{h}^0 &= \{ v_h: v_h \in V_{h}, v_h|_e =0 \,  \text{on} \, e \in \partial \Omega \}.
\end{align*}
Next, we present the definition of projection operators on the space $V_h$.  Set $Q_h u = \{ Q_0 u, Q_b u\}$ and $\widetilde{Q}_hu= \{ Q_0 u, Q_\delta u\}$. Denote by $P_hu$ and $I_hu$ the following projection operators:
\begin{eqnarray*}
	P_hu =\left\{\begin{array}{cc}
		u,& T \in \mathcal{T}_h^n,\\
		Q_h u, & T \in \mathcal{T}_h^I,\\
	\end{array}
	\right.
	\quad
	I_hu =\left\{\begin{array}{cc}
		\Pi_h u,& T \in \mathcal{T}_h^n,\\
		\widetilde{Q}_hu, & T \in \mathcal{T}_h^I.\\
	\end{array}
	\right.
\end{eqnarray*}

\begin{definition}
	For any $v \in V_h^I$, {\color{black}the gradient approximation} $\nabla_w v$ {\color{black} on each interface element $T \in \mathcal{T}_h^I$} satisfies
	\begin{eqnarray}\label{Def_weak_gradient_1}
		(\beta \nabla_w v, {\bf{q}})_T =(\beta\nabla v_0, {\bf{q}})_T -\langle Q_b v_0 -v_b, \beta {\bf{q}} \cdot \bn \rangle_{\partial T} ,\, \forall \, {\bf{q}} \in \nabla \mathcal{V}_k(T),
	\end{eqnarray}
	where $\bn$ is the unit outward normal vector on $\partial T$.
\end{definition}
Based on the above definitions, the following numerical scheme is proposed.

\begin{algorithm}[H]
	{\small
		\caption{The Immersed {\color{black}skeletal finite element} Scheme}
		Find $u_h \in V_{h}$ and $u_h = I_h g$ on $\partial \Omega$ to satisfy
		\begin{align}
			a^n(u_h,v_h)+a_s^I(u_h,v_h)=(f, v_h), \, \forall \, v_h \in V_{h}^0, \label{WGscheme}
		\end{align}
		where
		\begin{align*}
			a^n(u_h,v_h)&=\sum_{T \in \mathcal{T}_h^n}(\beta\nabla u_h, \nabla v_h)_T,\\
			s(u_h,v_h)&=\sum_{T \in \mathcal{T}_h^I} h_T^{-1}\langle \beta( Q_b u_0- u_b), Q_b v_0- v_b \rangle_{\partial T},\\
			a_s^I(u_h,v_h)&=\sum_{T \in \mathcal{T}_h^I}
			(\beta\nabla_w u_h, \nabla_w v_h)_T +s(u_h,v_h),\\
			(f,v_h)&=\sum_{T \in \mathcal{T}_h^n}(f,v)_T+\sum_{T \in \mathcal{T}_h^I}(f,v_0)_T.
		\end{align*}
	}
\end{algorithm}

{\color{black}In above numerical scheme, $s(u_h,v_h)$ is the LS stabilization to ensure the weak continuity of the numerical solution.}

We prove the well-posedness of the numerical scheme. Define the following semi-norm in $V_h+H^1(\Omega)$:
\begin{eqnarray}\label{H1norm}
	\begin{split}
		\| v_h \|_{1,h}^2= \sum_{T \in \mathcal{T}_h^n} \|\nabla v_h\|_T^2 +  \sum_{T \in \mathcal{T}_h^I}\left( \|\nabla_w v_h\|_T^2 +h_T^{-1}\| Q_b v_0- v_b\|_{\partial T}^2 \right).
	\end{split}
\end{eqnarray}

\begin{lemma}\label{Energynorm}
	$\| \cdot \|_{1,h}$ is a norm in $V_h^0$.
\end{lemma}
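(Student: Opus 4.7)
The plan is to reduce everything to a definiteness argument. Nonnegativity, positive homogeneity, and the triangle inequality follow immediately because each summand in $\|v_h\|_{1,h}^2$ is the square of an $L^2$-seminorm composed with linear operators ($\nabla$, $\nabla_w$, $Q_b$, and the edge trace). So the real content is to prove that if $v_h\in V_h^0$ and $\|v_h\|_{1,h}=0$, then $v_h\equiv 0$.

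First I would analyse each element separately. On a non-interface element $T\in\mathcal{T}_h^n$ the condition $\nabla v_h=0$ immediately forces $v_h$ to equal some constant $c_T$. On an interface element $T\in\mathcal{T}_h^I$ the two conditions $\nabla_w v_h=0$ and $Q_b v_0=v_b$ on $\partial T$, substituted into the defining identity (\ref{Def_weak_gradient_1}), yield $(\nabla v_0,\bm{q})_T=0$ for every $\bm{q}\in\nabla\mathcal{V}_k(T)$. Since $v_0\in\mathcal{V}_k(T)$, the choice $\bm{q}=\nabla v_0$ is admissible and gives $\nabla v_0=0$ on each subregion. Combined with the built-in continuity jump $[v_0]_\Gamma=0$ coming from $\mathcal{V}_k(T)$, this shows that $v_0$ is a single constant $c_T$ on $T$, and then $v_b=Q_b v_0 = c_T$ on $\partial T$ as well.

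Next I would glue these elementwise constants together via the coupling relations that define $V_h$. Across a non-interface/non-interface edge the nodal structure of $W^n(T)$ provides standard continuity, so the two element constants coincide. Across an interface/interface edge the boundary function $v_b$ is single-valued and equals the interior constant on each side, forcing the two interior constants to agree. Across an interface/non-interface edge $e\in\mathcal{E}_h^I$ the coupling $v_b=Q_b(v_h|_e)$ equates the interface-side constant with the $L^2$-projection of the (constant) non-interface trace on $e$, which since constants reproduce under $Q_b$ gives $c_T=c_{T'}$. Connectedness of $\mathcal{T}_h$ then makes $v_h$ a single global constant $c$, and the boundary condition $v_b=0$ on $\partial\Omega$ finally kills $c$.

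The one step requiring genuine care is the interface-element reduction: one must know that $\nabla\mathcal{V}_k(T)$ is large enough to contain $\nabla v_0$ as a legitimate test function, and that an IFE function with vanishing gradient on each subregion is necessarily a global constant on $T$. Both facts are structural properties of the IFE space cited from \cite{MR4682175}; once they are in hand, the remaining argument is essentially bookkeeping across the three edge types.
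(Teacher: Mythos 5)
Your proposal is correct and follows essentially the same route as the paper: testing the weak gradient identity (\ref{Def_weak_gradient_1}) with $\bm{q}=\nabla v_0$ (the paper takes $w=v_0$) to force $\nabla v_0=0$ on interface elements, then using $Q_b v_0=v_b$ and the coupling $v_b=Q_b(v_h|_e)$ on $\mathcal{E}_h^I$ to propagate the vanishing from the boundary inward. Your gluing step is in fact organized a bit more carefully than the paper's (which jumps directly from $v=0$ on $\partial\Omega$ to $v=0$ on all non-interface elements and tacitly assumes every interface element meets an edge of $\mathcal{E}_h^I$), but the underlying argument is the same.
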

\begin{proof}
	Let $\| v_h \|_{1,h}=0$ for some $v_h \in V_{h}^0$. By the definition of $\| \cdot \|_{1,h}$, we have
	\begin{eqnarray*}
		\nabla v_h ={\bf{0}} \,\text{in}\, T \in \mathcal{T}_h^n ;\quad
		\nabla_w v_h ={\bf{0}}\,\text{in}\, T \in \mathcal{T}_h^I, \quad Q_b v_0 =v_b \, \text{on}\, e \in \mathcal{E}_h.
	\end{eqnarray*}
	Therefore, $v_h$ is a constant in $T \in \mathcal{T}_h^n$. Using $v_h=0$ on $\partial \Omega$, $v_h$ is zero in $T \in \mathcal{T}_h^n$.
	
	For $T \in \mathcal{T}_h^I$ and $w \in \mathcal{V}_k(T)$, according to Eq.\eqref{Def_weak_gradient_1}, we obtain
	\begin{eqnarray}\label{norm_proof_2}
		\begin{split}
			0=&(\beta \nabla_w v_h, \nabla w)_T\\
			=&(\beta  \nabla v_0, \nabla w)_T - \langle
			Q_b v_0 -v_b, \beta  \nabla w \cdot \bn \rangle_{\partial T}\\
			=&(\beta  \nabla v_0, \nabla w)_T.
		\end{split}
	\end{eqnarray}
	In Eq.(\ref{norm_proof_2}), taking $w = v_0$ yields $\nabla v_0 ={\bf{0}}$ in $T \in \mathcal{T}_h^I$. Therefore, $v_0$ is a constant in $T \in \mathcal{T}_h^I$. Combining with $Q_b v_0 = v_b$ on $e \in \mathcal{E}_h$, we get $v_b = Q_b v_0 = v_0$. By $v_b= Q_b(v_h|_e)=0 $ on $e \in \mathcal{E}_h^n$ and $v_h=0$ on $\partial \Omega$, we get $v_0 =v_b =0$. Therefore, $v_h =0$ is obtained. The proof of the lemma is completed.
\end{proof}

\begin{lemma}\label{Abounded_IEI}
	For any $ \bv,\bw \in V_h$, we have
	\begin{align*}
		|a_s^I(\bv,\bw)+a^n(\bv,\bw)|&\leqslant C \beta_2 \| \bv \|_{1,h} \cdot \| \bw \|_{1,h}.
	\end{align*}
\end{lemma}
According to the above lemma, it's easy to obtain the following theorem.

\begin{theorem}
	The numerical scheme (\ref{WGscheme}) has only one solution.	
\end{theorem}

\section{Error Equation}

In this section, we give the error equation and derive the error estimates under the $H^1$ norm and $L^2$ norm. Let $u_h$ denote the
numerical solution obtained from the scheme. The errors associated with $u$ are defined as follows:
$$
e_h = P_hu -u_h, \, \varepsilon_h= I_hu -u_h.
$$
For $T \in \mathcal{T}_h^I$, denote by $\mathcal{Q}_h$ the $L^2$ projection operator from $[L^2(T)]^2$ into $\nabla \mathcal{V}_k(T)$. In order to get the error estimate of $e_h$, we first give the error equation and error estimate of $\varepsilon_h$.
\begin{lemma}
	For $u \in H^1(\Omega)$, $T \in \mathcal{T}_h^I$ and ${\bf{q}} \in \nabla \mathcal{V}_k(T)$, the following properties hold true
	\begin{align}
		\begin{split}\label{weak_gradient_exchange_2}
			(\beta  \nabla_w (Q_h u), {\bf{q}})_T =(\beta \mathcal{Q}_h (\nabla u), {\bf{q}})_T + (\beta \nabla(Q_0 u -u),  {\bf{q}})_T
			- \langle Q_b (Q_0 u)- Q_b u,  \beta  {\bf{q}} \cdot \bn \rangle_{\partial T},
		\end{split}\\
		\begin{split}\label{weak_gradient_exchange_3}
			(\beta \nabla_w (\widetilde{Q}_h u), {\bf{q}})_T = (\beta  \mathcal{Q}_h (\nabla u), {\bf{q}})_T + (\beta \nabla(Q_0 u -u),  {\bf{q}})_T 
			- \langle Q_b (Q_0 u)- Q_{\delta} u,  \beta {\bf{q}} \cdot \bn \rangle_{\partial T}.
		\end{split}
	\end{align}
\end{lemma}
\begin{proof}
	For $T \in \mathcal{T}_h^I$ and ${\bf{q}} \in \nabla \mathcal{V}_k(T)$, it follows from Eq.\eqref{Def_weak_gradient_1} and the definition of the  $L^2$ projection operator $\mathcal{Q}_h$ that
	\begin{eqnarray*}
		\begin{split}
			(\beta \nabla_w (Q_h u), {\bf{q}})_T=&(\beta \nabla(Q_0 u), {\bf{q}})_T -\langle Q_b(Q_0 u) - Q_b u, \beta  {\bf{q}} \cdot \bn \rangle_{\partial T}\\
			=&(\beta \nabla u,{\bf{q}})_T+(\beta \nabla(Q_0 u -u), {\bf{q}})_T -\langle Q_b(Q_0 u) - Q_b u, \beta  {\bf{q}} \cdot \bn \rangle_{\partial T}\\
			=&(\beta \mathcal{Q}_h (\nabla u),  {\bf{q}})_T+(\beta \nabla(Q_0 u -u),  {\bf{q}})_T -\langle Q_b(Q_0 u) - Q_b u,  \beta {\bf{q}} \cdot \bn \rangle_{\partial T}.
		\end{split}
	\end{eqnarray*}
	Similarly, Eq.\eqref{weak_gradient_exchange_3} is obtained. The proof of the above lemma is completed.
\end{proof}

\begin{lemma}\label{error_equation_pre}
	For $u$ satisfying Eqs.(\ref{Elliptic_problem})-(\ref{Laplacian_extend_interface_conditions}) and $v \in V_h^0$, then we have the following equation
	\begin{eqnarray}\label{error_equation_1}
		\begin{split}
			\sum_{T \in \mathcal{T}_h^n}(\beta \nabla(\Pi_h u), \nabla v)_T+\sum_{T \in \mathcal{T}_h^I}(\beta \nabla_w \widetilde{Q}_h u,v)_T
			=\sum_{T \in \mathcal{T}_h^n}(f,v)_T+\sum_{T \in \mathcal{T}_h^I}(f,v_0)_T-\widetilde{\phi}_u(v), 
		\end{split}
	\end{eqnarray}
	where
	\begin{align*}
		\widetilde{\phi}_u(v)&=\ell_1(u,v)-\ell_2(u,v)+\widetilde{\ell}_3(u,v)-\ell_4(u,v)+\ell_5(u,v)-\ell_6(u,v),\\
		\ell_1(u,v)&=\sum_{T \in \mathcal{T}_h^I}\langle Q_bv_0 - v_b, \beta \mathcal{Q}_h (\nabla u) \cdot \bn - \beta \nabla u \cdot \bn \rangle_{\partial T},\\
		\ell_2(u,v)&=\sum_{T \in \mathcal{T}_h^I}(\nabla(Q_0 u- u), \beta \nabla_w v)_T,\\
		\widetilde{\ell}_3(u,v)&=\sum_{T \in \mathcal{T}_h^I} \langle
		Q_b(Q_0 u)-Q_{\delta} u, \beta\nabla_w v \cdot \bn \rangle_{\partial T},\\
		\ell_4(u,v)&=\sum_{T \in \mathcal{T}_h^I}\langle \beta\nabla u \cdot \bn, v_0 -Q_b v_0 \rangle_{\partial T},\\
		\ell_5(u,v)&=\sum_{e \in \mathcal{E}_h^n} \langle \beta\nabla u \cdot \bn_e, v- Q_b v\rangle_e,\\
		\ell_6(u,v)&=\sum_{T \in \mathcal{T}_h^n}(\beta \nabla(\Pi_h u-u),\nabla v)_T,
	\end{align*}
	and $\bn_e$ is the unit normal vector on $e \in \mathcal{E}_h^n$ pointing from the interface element $T \in \mathcal{T}_h^I$ into the non-interface element $T \in \mathcal{T}_h^n$.
\end{lemma}
\begin{proof}
	For the non-interface element $T \in \mathcal{T}_h^n$, integrating with the test function $v$ on two sides of  Eq.(\ref{Elliptic_problem}), applying integration by parts and
	\begin{align*}
		\sum_{T \in \mathcal{T}_h^n} \langle \beta\nabla u \cdot \bn, v  \rangle_{\partial T}=-\sum_{e \in \mathcal{E}_h^n} \langle \beta\nabla u \cdot \bn_e, v  \rangle_{e}
	\end{align*}
	leads to
	\begin{align}\label{proof_ee_9}
		&\sum_{T \in \mathcal{T}_h^n}(f,v)_T\nonumber\\
		=&\sum_{T \in \mathcal{T}_h^n}-(\nabla \cdot (\beta\nabla u),v)_T\\
		=&\sum_{T \in \mathcal{T}_h^n} (\beta\nabla u,  \nabla v)_T -\sum_{T \in \mathcal{T}_h^n} \langle \beta\nabla u \cdot \bn, v \rangle_{\partial T}\nonumber\\
		=&\sum_{T \in \mathcal{T}_h^n}(\beta \nabla(\Pi_h u),\nabla v)_T+\sum_{T \in \mathcal{T}_h^n} (\beta\nabla(u-\Pi_h u),  \nabla v)_T +\sum_{e \in \mathcal{E}_h^n} \langle \beta\nabla u \cdot \bn_e, v \rangle_{e}.\nonumber
	\end{align}
	Similarly, for $T \in \mathcal{T}_h^I$, integrating with $v_0$ of $v=\{v_0, v_b\} \in V_h^0$ on two sides of Eq.(\ref{Elliptic_problem}) yields 
	\begin{eqnarray}\label{proof_ee_3}
		\sum_{T \in \mathcal{T}_h^I}(-\nabla \cdot (\beta \nabla u), v_0)_T=\sum_{T \in \mathcal{T}_h^I}(f,v_0)_T.
	\end{eqnarray}
	Using integration by parts leads to
	\begin{eqnarray}\label{proof_ee_4}
		\sum_{T \in \mathcal{T}_h^I} (\beta \nabla u, \nabla v_0)_T - \sum_{T \in \mathcal{T}_h^I}  \langle
		\beta \nabla u \cdot \bn, v_0 \rangle_{\partial T}=\sum_{T \in \mathcal{T}_h^I} (f,v_0)_T.
	\end{eqnarray}
	We use the fact that $\sum_{T \in \mathcal{T}_h^I} \langle
	\beta \nabla u \cdot \bn, v_b \rangle_{\partial T} =\sum_{e \in \mathcal{E}_h^n} \langle
	\beta \nabla u \cdot \bn_e, v_b \rangle_{e}$ to derive
	\begin{eqnarray}\label{proof_ee_5}
		\begin{split}
			&\sum_{T \in \mathcal{T}_h^I} \langle
			\beta \nabla u \cdot \bn, v_0 \rangle_{\partial T}\\
			=&\sum_{T \in \mathcal{T}_h^I} \langle
			\beta \nabla u \cdot \bn, v_0 -Q_b v_0\rangle_{\partial T}
			+\sum_{T \in \mathcal{T}_h^I} \langle
			\beta \nabla u \cdot \bn, Q_b v_0-v_b\rangle_{\partial T}+\sum_{e \in \mathcal{E}_h^n} \langle
			\beta \nabla u \cdot \bn_e, v_b \rangle_{e}.
		\end{split}
	\end{eqnarray}
	Substituting Eq.(\ref{proof_ee_5}) into Eq.(\ref{proof_ee_4}) yields
	\begin{eqnarray}
		\begin{split}\label{proof_ee_10}
			&\sum_{T \in \mathcal{T}_h^I}(\beta \nabla u, \nabla v_0)_T\\
			=&\sum_{T \in \mathcal{T}_h^I}(f,v_0)_T+\sum_{T \in \mathcal{T}_h^I} \langle
			\beta \nabla u \cdot \bn, v_0 -Q_b v_0\rangle_{\partial T}
			+\sum_{T \in \mathcal{T}_h^I} \langle
			\beta \nabla u \cdot \bn, Q_b v_0-v_b\rangle_{\partial T}\\
			&+\sum_{e \in \mathcal{E}_h^n} \langle
			\beta \nabla u \cdot \bn_e, v_b \rangle_{e}.
		\end{split}
	\end{eqnarray}
	For the interface element $T \in \mathcal{T}_h^I$, using Eq.(\ref{weak_gradient_exchange_2}), Eq.(\ref{Def_weak_gradient_1}) and the definition of the $L^2$ projection operator $\mathcal{Q}_h$, we obtain
	\begin{eqnarray}\label{proof_ee_2}
		\begin{split}
			&(\beta\nabla_w \widetilde{Q}_h u, \nabla_w v)_T\\
			=&(\beta\mathcal{Q}_h (\nabla u), \nabla_w v)_T+(\nabla(Q_0 u -u), \beta \nabla_w v)_T - \langle Q_b (Q_0 u)- Q_{\delta} u, \beta \nabla_w v \cdot \bn \rangle_{\partial T}\\
			=&(\beta\mathcal{Q}_h (\nabla u), \nabla v_0)_T -\langle Q_b v_0 -v_b, \beta\mathcal{Q}_h  (\nabla u)\cdot \bn \rangle_{\partial T}\\
			&+(\nabla(Q_0 u -u), \beta \nabla_w v)_T - \langle Q_b (Q_0 u)- Q_{\delta} u, \beta \nabla_w v \cdot \bn \rangle_{\partial T}\\
			=&(\beta \nabla u, \nabla v_0)_T -\langle Q_b v_0 -v_b, \beta\mathcal{Q}_h  (\nabla u) \cdot \bn \rangle_{\partial T}\\
			&+(\nabla(Q_0 u -u), \beta \nabla_w v)_T - \langle Q_b (Q_0 u)- Q_{\delta} u, \beta \nabla_w v \cdot \bn \rangle_{\partial T}.
		\end{split}
	\end{eqnarray}
	By using the fact that $v_b = Q_b v$ on $e \in \mathcal{E}_h^n$ and Eqs.(\ref{proof_ee_9})-(\ref{proof_ee_2}), we have
	\begin{align}\label{proof_ee_6}
		&\sum_{T \in \mathcal{T}_h^n}(\beta \nabla(\Pi_h u),\nabla v)_T +
		\sum_{T \in \mathcal{T}_h^I} (\beta\nabla_w \widetilde{Q}_h u, \nabla_w v)_T \nonumber\\
		= &\sum_{T \in \mathcal{T}_h^n}(f,v)_T+\sum_{T \in \mathcal{T}_h^I}(f,v_0)_T-\sum_{T \in \mathcal{T}_h^I} \langle Q_b v_0 -v_b, \beta \mathcal{Q}_h  (\nabla u) \cdot \bn - \beta \nabla u \cdot \bn \rangle_{\partial T}\nonumber\\
		&+\sum_{T \in \mathcal{T}_h^I}(\nabla(Q_0 u-u), \beta \nabla_w v)_T-\sum_{T \in \mathcal{T}_h^I} \langle
		Q_b(Q_0 u)-Q_{\delta} u, \beta\nabla_w v \cdot \bn \rangle_{\partial T}\\
		&+\sum_{T \in \mathcal{T}_h^I}\langle \beta\nabla u \cdot \bn, v_0 -Q_b v_0 \rangle_{\partial T}-\sum_{e \in \mathcal{E}_h^n}\langle \beta \nabla u \cdot \bn_e, v-Q_b v \rangle_{\partial T}\nonumber\\
		&+\sum_{T \in \mathcal{T}_h^n} (\beta\nabla(\Pi_h u-u),  \nabla v)_T .\nonumber
	\end{align}
	The proof of the above lemma is completed.
\end{proof}

\begin{lemma}\label{error_equation_pre_varepsilon_h}
	The error $\varepsilon_h$ satisfies the following error equation
	\begin{eqnarray}\label{error_equation_varepsilon_h}
		a^n(\varepsilon_h,v)+a_s^I(\varepsilon_h,v)=s(\widetilde{Q}_hu,v)-\widetilde{\phi}_u(v), \quad \forall \, v \in V_h^0.
	\end{eqnarray}
\end{lemma}
\begin{proof}
	For any $v \in V_h^0$, using Lemma \ref{error_equation_pre} and adding the stabilizer $s(\widetilde{Q}_hu,v)$ leads to 
	\begin{eqnarray*}\label{proof_ee_7_IEI}
		a^n(\Pi_hu,v)+a_s^I(\widetilde{Q}_hu,v)=\sum_{T \in \mathcal{T}_h^n}(f,v)_T+\sum_{T \in \mathcal{T}_h^I}(f,v_0)_T+s(\widetilde{Q}_hu,v)-\widetilde{\phi}_u(v).
	\end{eqnarray*}
	Subtracting Eq.(\ref{WGscheme}) from the above equation yields 
	\begin{eqnarray}\label{proof_ee_8_IEI}
		a^n(\varepsilon_h,v)+a_s^I(\varepsilon_h,v)=s(\widetilde{Q}_hu,v)-\widetilde{\phi}_u(v).
	\end{eqnarray}
	The proof of the above lemma is completed.
\end{proof}

Similar to the proof of lemma \ref{error_equation_pre}-\ref{error_equation_pre_varepsilon_h}, using Eq.\eqref{weak_gradient_exchange_2} leads to the following theorem.

\begin{theorem}
	The error $e_h$ satisfies the following error equation
	\begin{eqnarray}\label{error_equation}
		a^n(e_h,v)+a_s^I(e_h,v)=s(Q_hu,v)-\phi_u(v), \quad \forall \, v \in V_h^0,
	\end{eqnarray}
	where 
	\begin{align*}
		\phi_u(v)&=\ell_1(u,v)-\ell_2(u,v)+\ell_3(u,v)-\ell_4(u,v)+\ell_5(u,v),\\
		\ell_3(u,v)&=\sum_{T \in \mathcal{T}_h^I} \langle
		Q_b(Q_0 u)-Q_b u, \beta\nabla_w v \cdot \bn \rangle_{\partial T}.
	\end{align*}
\end{theorem}

\section{Error Estimate in $H^1$ norm}

\begin{lemma}\label{H1error_varepsilon_h}
	Assume $u \in PH^{k+1}(\Omega)$ is the exact solution of
	the Eqs.(\ref{Elliptic_problem})-(\ref{Laplacian_extend_interface_conditions}) and $u_h \in V_h$ is the numerical solution obtained from the scheme (\ref{WGscheme}), then we have
	\begin{eqnarray}\label{H1errorestimates_varepsilon_h}
		\| \varepsilon_h \|_{1,h} \leqslant C \left(\dfrac{\beta_2}{\beta_1}\right)^2 h^{k}(\|u_1\|_{k+1,\Omega_1}+\|u_2\|_{k+1,\Omega_2}).
	\end{eqnarray}	
\end{lemma}
\begin{proof}
	Choosing $v = \varepsilon_h \in V_h^0$ in Eq.(\ref{error_equation_varepsilon_h}) and using Lemma \ref{H1estimate}  leads to 
	\begin{eqnarray}\label{H1error_varepsilon_h_proof_10_IEI}
		\begin{split}
			\beta_1\|\varepsilon_h\|_{1,h}^2 \leqslant&a^n(\varepsilon_h,\varepsilon_h)+a_s^I(\varepsilon_h,\varepsilon_h)\\
			\leqslant& C\beta_2 h^k (\|u_1\|_{k+1,\Omega_1}+\|u_2\|_{k+1,\Omega_2}) \|\varepsilon_h\|_{1,h}\\
			&+C\frac{\beta_2^2}{\beta_1} h^k (\|u_1\|_{k+1,\Omega_1}+\|u_2\|_{k+1,\Omega_2}) \|\varepsilon_h\|_{1,h}.
		\end{split}
	\end{eqnarray}
	Therefore, we have
	\begin{eqnarray}
		\| \varepsilon_h \|_{1,h} \leqslant C \left(\dfrac{\beta_2}{\beta_1}\right)^2 h^{k}(\|u_1\|_{k+1,\Omega_1}+\|u_2\|_{k+1,\Omega_2}).
	\end{eqnarray}
	The proof of the above lemma is completed.
\end{proof}

\begin{theorem}\label{H1error}
	Based on the assumption in Lemma \ref{H1error_varepsilon_h}, we derive the following error estimate
	\begin{eqnarray}\label{H1errorestimates}
		\| e_h \|_{1,h} \leqslant C \left(\dfrac{\beta_2}{\beta_1}\right)^2h^k (\|u_1\|_{k+1,\Omega_1}+\|u_2\|_{k+1,\Omega_2}).
	\end{eqnarray}
\end{theorem}
\begin{proof}
	Choosing $v = \varepsilon_h \in V_h^0$ in Eq.(\ref{error_equation}) leads to 
	\begin{eqnarray*}
		a^n(e_h,\varepsilon_h)+a_s^I(e_h,\varepsilon_h)=s(Q_hu,\varepsilon_h)-\phi_u(\varepsilon_h).
	\end{eqnarray*}
	Thus, we have
	\begin{eqnarray*}
		a^n(e_h,e_h)+a_s^I(e_h,e_h)=s(Q_hu,\varepsilon_h)-\phi_u(\varepsilon_h)-a^n(e_h,I_h u - P_hu)-a_s^I(e_h, I_h u - P_hu).
	\end{eqnarray*}
	For $a^n(e_h,I_h u - P_hu)$, by the Cauchy-Schwarz inequality and the {interpolation estimate}, we have
	\begin{eqnarray}\label{H1_proof_1}
		\begin{split}
			|a^n(e_h,I_h u - P_hu)|=&\left|\sum_{T \in \mathcal{T}_h^n}(\beta \nabla e_h, \nabla (\Pi_h u -u ))_T\right|\\
			\leqslant & C \beta_2 \left(\sum_{T \in \mathcal{T}_h^n} \|\nabla e_h\|_T^2  \right)^{\frac{1}{2}} \left(\sum_{T \in \mathcal{T}_h^n} \|\nabla (\Pi_h u -u )\|_T^2  \right)^{\frac{1}{2}}\\
			\leqslant & C\beta_2 h^k (\|u_1\|_{k+1,\Omega_1}+\|u_2\|_{k+1,\Omega_2}) \|e_h\|_{1,h}.
		\end{split}
	\end{eqnarray}
	For $a_s^I(e_h,I_h u - P_hu)$, since $\widetilde{Q}_h  u-Q_hu=\{0,Q_{\delta} u - Q_b u\}$, we get
	\begin{align}
		&\left|a_s^I(e_h,I_h u - P_hu)\right|
		=\left|a_s^I(e_h,\widetilde{Q}_h  u-Q_hu)\right|\nonumber\\
		=&\Bigg|\sum_{T \in \mathcal{T}_h^I} (\beta \nabla_w e_h, \nabla_w (\widetilde{Q}_h  u-Q_hu))_T+ \sum_{T \in \mathcal{T}_h^I} h_T^{-1} \langle \beta (Q_b e_0-e_b), Q_b u -Q_b(\Pi_h u) \rangle_{\partial T \cap \mathcal{E}_h^n} \Bigg|\nonumber\\
		\leqslant &C\beta_2 \sum_{T \in \mathcal{T}_h^I} \|\nabla_w e_h\|_T \|\nabla_w (\widetilde{Q}_h  u-Q_hu)\|_T\nonumber \\
		&+ C\beta_2 \sum_{T \in \mathcal{T}_h^I} h_T^{-1} \|Q_b e_0-e_b\|_{\partial T \cap \mathcal{E}_h^n} \|Q_b u -Q_b(\Pi_h u) \|_{\partial T \cap \mathcal{E}_h^n}\\
		\leqslant & C\beta_2 \left(\sum_{T \in \mathcal{T}_h^I}\|\nabla_w e_h\|_T^2
		\right)^{\frac{1}{2}} \left(\sum_{T \in \mathcal{T}_h^I}\|\nabla_w (\widetilde{Q}_h  u-Q_hu)\|_T^2
		\right)^{\frac{1}{2}}\nonumber\\
		&+C\beta_2 \left(\sum_{T \in \mathcal{T}_h^I}h_T^{-1} \|Q_b e_0-e_b\|_{\partial T \cap \mathcal{E}_h^n}^2 \right)^{\frac{1}{2}}\left(\sum_{T \in \mathcal{T}_h^I}h_T^{-1} \|Q_b u -Q_b(\Pi_h u)\|_{\partial T \cap \mathcal{E}_h^n}^2 \right)^{\frac{1}{2}}\nonumber\\
		\leqslant & C\beta_2\|e_h\|_{1,h}\left( \sum_{T \in \mathcal{T}_h^I} \|\nabla_w (\widetilde{Q}_h  u-Q_hu)\|_T^2+h_T^{-1}\|\Pi_h u -u \|^2_{\partial T \cap \mathcal{E}_h^n} \right)^{\frac{1}{2}}\nonumber\\
		\leqslant & C\beta_2\|e_h\|_{1,h}\left( \sum_{T \in \mathcal{T}_h^I} \|\nabla_w (\widetilde{Q}_h  u-Q_hu)\|_T^2+h^{2k}(\|u_1\|_{k+1,\Omega_1}^2+\|u_2\|_{k+1,\Omega_2}^2)
		\right)^{\frac{1}{2}}.\nonumber
	\end{align}
	For $\|\nabla_w (\widetilde{Q}_h  u-Q_hu)\|_T$ and $\mathbf{q} \in \nabla \mathcal{V}_k(T)$, it follows from Eq.\eqref{Def_weak_gradient_1}, the Cauchy-Schwarz inequality, the definition of $L^2$ projection operator $Q_b$ and Lemma \ref{inverse_interface_element} that
	\begin{eqnarray*}
		\begin{split}
			&(\beta\nabla_w (\widetilde{Q}_h  u-Q_hu), \mathbf{q})_T\\
			=&\langle Q_b(\Pi_hu ) -Q_bu, \beta \mathbf{q} \cdot \bn \rangle_{\partial T \cap \mathcal{E}_h^n}\\
			\leqslant & C \beta_2 h_T^{-\frac{1}{2}}\|\mathbf{q}\|_T \|u - \Pi_h u\|_{\partial T \cap \mathcal{E}_h^n}\\
			\leqslant &C \beta_2  h^k(\|u_1\|_{k+1,\Omega_1}+\|u_2\|_{k+1,\Omega_2}) \|\mathbf{q}\|_T.
		\end{split}
	\end{eqnarray*}
	Taking $\mathbf{q}=\nabla_w (\widetilde{Q}_h  u-Q_hu)$ in the above estimate, we get
	\begin{eqnarray*}
		\|\nabla_w (\widetilde{Q}_h  u-Q_hu)\|_T \leqslant  C  \frac{\beta_2}{\beta_1} h^k(\|u_1\|_{k+1,\Omega_1}+\|u_2\|_{k+1,\Omega_2}) .
	\end{eqnarray*}
	Thus, we obtain
	\begin{eqnarray}\label{H1_proof_2}
		\left|a_s^I(e_h,\widetilde{Q}_h  u-Q_hu)\right| \leqslant C \frac{\beta_2^2}{\beta_1} h^k\|u\|_{k+1} \|e_h\|_{1,h}.
	\end{eqnarray}
	According to the estimates (\ref{H1estimate1})-(\ref{H1estimate5}), (\ref{H1_proof_1})-(\ref{H1_proof_2}), Lemma \ref{H1error_varepsilon_h} and Young's inequality, we have
	\begin{align*}
		&\beta_1\| e_h \|_{1,h}^2 \\
		\leqslant& C\beta_2  h^k(\|u_1\|_{k+1,\Omega_1}+\|u_2\|_{k+1,\Omega_2}) ( \| e_h \|_{1,h}+ \| \varepsilon_h \|_{1,h} )\\
		&+C \frac{\beta_2^2}{\beta_1} h^k(\|u_1\|_{k+1,\Omega_1}+\|u_2\|_{k+1,\Omega_2}) ( \| e_h \|_{1,h}+ \| \varepsilon_h \|_{1,h} )\\
		\leqslant& C \frac{\beta_2^2}{\beta_1} h^k(\|u_1\|_{k+1,\Omega_1}+\|u_2\|_{k+1,\Omega_2})  \| e_h \|_{1,h}
		+C\frac{\beta_2^4}{\beta_1^3} h^{2k}(\|u_1\|_{k+1,\Omega_1}+\|u_2\|_{k+1,\Omega_2})^2 \\
		\leqslant & C\frac{\beta_2^4}{\beta_1^3} h^{2k}(\|u_1\|_{k+1,\Omega_1}+\|u_2\|_{k+1,\Omega_2})^2 +\frac{\beta_1}{2}\| e_h \|_{1,h}^2.
	\end{align*}
	Therefore, the following estimate holds true
	$$
	\| e_h \|_{1,h} \leqslant C\left(\dfrac{\beta_2}{\beta_1}\right)^2 h^{k}(\|u_1\|_{k+1,\Omega_1}+\|u_2\|_{k+1,\Omega_2}).
	$$
	The proof of the above theorem is completed. 
\end{proof}

\section{Error Estimate in the $L^2$ norm}
In this section, we use the dual argument to give the error estimate in the $L^2$ norm. It's obvious that error $e_0$  satisfies
$$
e_0=\varepsilon_0+\left\{ \begin{array}{cc}
	u-\Pi_h u, & T \in \mathcal{T}_h^n,\\
	0, & T \in \mathcal{T}_h^I.
\end{array}
\right.
$$
It's easy to get the estimate of $u-\Pi_h u$.
Thus, we only need to consider the estimate of $\varepsilon_0$ in the $L^2$ norm. We consider the following problem: seeking $\varphi$ to satisfy
\begin{align}
	-\nabla \cdot (\beta \nabla \varphi) &=\varepsilon_0,\, \text{in} \, \Omega,\label{dualproblem1}\\
	\varphi &= 0, \,\,\, \text{on} \, \partial \Omega,\label{dualproblem2}\\
	\varphi_1 - \varphi_2 &= 0, \,\,\,\text{on}\, \Gamma,\label{dualproblem3}\\
	\beta_1 \nabla \varphi_1 \cdot \bn - \beta_2 \nabla \varphi_2 \cdot \bn &= 0, \,\,\, \text{on}\, \Gamma,\label{dualproblem4}
\end{align}
where 
$$\varepsilon_0=\left\{ \begin{array}{cc}
	\Pi_h u-u_h, & T \in \mathcal{T}_h^n,\\
	Q_0u -u_0, & T \in \mathcal{T}_h^I.
\end{array}
\right.$$
Assume the solution $\varphi$ satisfies $H^2$-regularity \cite{Enrich_nonhomogeneous_2023}, i.e
\begin{eqnarray}\label{H2regularity_1}
	{	\|\beta \varphi\|_{2,\Omega} \leqslant C \|\varepsilon_0\|.}
\end{eqnarray}
Thus, we obtain
\begin{eqnarray}\label{H2regularity}
	{	\| \varphi\|_{2,\Omega} \leqslant \frac{C}{\beta_1}  \|\varepsilon_0\|.}
\end{eqnarray}
\begin{theorem}
	Based on the assumption of Theorem \ref{H1error}, we have the following error estimate in the $L^2$ norm
	\begin{eqnarray}
		\|e_0\| \leqslant C \left({\frac{\beta_2}{\beta_1}}\right)^4 h^{k+1} (\|u_1\|_{k+1,\Omega_1}+\|u_2\|_{k+1,\Omega_2}).
	\end{eqnarray}
\end{theorem}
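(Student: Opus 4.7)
The plan is to use the Aubin--Nitsche duality argument. The first step is to derive a dual error equation of the form (\ref{error_equation_1}) with $u$ and $f$ replaced by $\varphi$ and $e_0$: for every $v \in V_h^0$,
\begin{equation*}
\sum_{T \in \mathcal{T}_h^n}(A\nabla\varphi, \nabla v)_T + \sum_{T \in \mathcal{T}_h^I}(A\nabla_w Q_h\varphi, \nabla_w v)_T = \sum_{T \in \mathcal{T}_h^n}(e_0, v)_T + \sum_{T \in \mathcal{T}_h^I}(e_0, v_0)_T - \phi_\varphi(v),
\end{equation*}
where $\phi_\varphi$ is defined by the same formula as $\phi_u$ with $\varphi$ in place of $u$. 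Since $e_h \in V_h^0$ and $e_0$ agrees with $e_h$ on $\mathcal{T}_h^n$ and with the interior part of $e_h$ on $\mathcal{T}_h^I$, choosing $v = e_h$ collapses the right-hand side to $\|e_0\|^2 - \phi_\varphi(e_h)$. Rewriting the weak-gradient term via $a_s^I$ yields
\begin{equation*}
\|e_0\|^2 = a^n(\varphi, e_h) + a_s^I(Q_h\varphi, e_h) - s(Q_h\varphi, e_h) + \phi_\varphi(e_h).
\end{equation*}

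The second step exploits the symmetry of the bilinear forms. Splitting $a^n(\varphi, e_h) = a^n(\varphi - \Pi_h\varphi, e_h) + a^n(e_h, \Pi_h\varphi)$ and $a_s^I(Q_h\varphi, e_h) = a_s^I(Q_h\varphi - \tilde{Q}_h\varphi, e_h) + a_s^I(e_h, \tilde{Q}_h\varphi)$, and applying the primal error equation (\ref{error_equation}) with test function $I_h\varphi \in V_h^0$, I obtain
\begin{equation*}
\|e_0\|^2 = a^n(\varphi - \Pi_h\varphi, e_h) + a_s^I(Q_h\varphi - \tilde{Q}_h\varphi, e_h) + s(Q_hu, I_h\varphi) - \phi_u(I_h\varphi) - s(Q_h\varphi, e_h) + \phi_\varphi(e_h).
\end{equation*}

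The third step is to bound each of these six terms by $Ch^{k+1}\|u\|_{k+1}\|\varphi\|_2$. The terms $\phi_\varphi(e_h)$ and $s(Q_h\varphi, e_h)$ are handled by applying Lemma \ref{H1estimate} with $\varphi \in H^2$, i.e.\ the case $k=1$, together with Theorem \ref{H1error}, so that the two factors combine to $h\|\varphi\|_2 \cdot h^k\|u\|_{k+1}$. The interpolation-type terms $a^n(\varphi - \Pi_h\varphi, e_h)$ and $a_s^I(Q_h\varphi - \tilde{Q}_h\varphi, e_h)$ are controlled by Cauchy--Schwarz together with the standard $H^2$ approximation property of $\Pi_h$ and the auxiliary bound $\|\nabla_w(Q_h\varphi - \tilde{Q}_h\varphi)\| \leqslant Ch\|\varphi\|_2$, which follows from the last estimate in the proof of Theorem \ref{H1error} specialized to $k=1$.

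The main obstacle is the sharp estimation of $\phi_u(I_h\varphi)$ and $s(Q_hu, I_h\varphi)$: a direct application of Lemma \ref{H1estimate} yields only $Ch^k\|u\|_{k+1}\|I_h\varphi\|_{1,h}$, which loses an order of $h$ because $\|I_h\varphi\|_{1,h}$ is bounded merely by $\|\varphi\|_1$. To recover the missing factor of $h$, I would re-do the Cauchy--Schwarz in each $\ell_i(u, I_h\varphi)$ in an unbalanced way: one factor retains $h^k\|u\|_{k+1}$ coming from the projection error of $u$, while the other factor, involving differences such as $Q_b(Q_0\varphi) - Q_b\varphi$, $Q_0\varphi - \varphi$, or $(I_h\varphi)_0 - Q_b(I_h\varphi)_0$, is bounded by $Ch\|\varphi\|_2$ using the $L^2$-projection and trace inequalities together with the $H^2$-regularity of $\varphi$. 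Particular care is required on the skeleton $\mathcal{E}_h^I$, where the coupling condition $v_b = Q_b(v_h|_e)$ produces an additional projection error $Q_b(\Pi_h\varphi) - Q_b\varphi$ that must also be absorbed. Summing all bounds gives $\|e_0\|^2 \leqslant Ch^{k+1}\|u\|_{k+1}\|e_0\|$ after invoking the regularity (\ref{H2regularity}), which yields the stated $L^2$ estimate.
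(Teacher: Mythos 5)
Your overall architecture coincides with the paper's: the duality identity $\|e_0\|^2 = a^n(\varphi,e_0)+a_s^I(Q_h\varphi,e_h)-s(Q_h\varphi,e_h)+\phi_\varphi(e_h)$, the symmetric splitting against $\Pi_h\varphi$ and $\tilde{Q}_h\varphi$, the substitution of the primal error equation \eqref{error_equation} with $v=I_h\varphi$, and the resulting six-term identity are exactly \eqref{proof_L2_22}--\eqref{proof_L2_25}. Your treatment of $\phi_\varphi(e_h)$, $s(Q_h\varphi,e_h)$ and the two interpolation terms matches the paper, and the ``unbalanced Cauchy--Schwarz'' you propose does work for $s(Q_hu,\tilde{Q}_h\varphi)$, $\ell_1$, $\ell_4$ and $\ell_5$, because in those terms both factors are genuine projection errors (one in $u$, one in $\varphi$).

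The gap is in $\ell_2(u,I_h\varphi)$ and $\ell_3(u,I_h\varphi)$. There the test-function factor is $A\nabla_w\tilde{Q}_h\varphi$ (respectively its normal trace), which is \emph{not} a projection-error difference: it is an $O(\|\varphi\|_1)$ quantity, not $O(h\|\varphi\|_2)$, so your unbalanced Cauchy--Schwarz yields only $Ch^k\|u\|_{k+1}\|\varphi\|_1$ for these two terms --- one order short. The paper's fix is to peel off the corrections $\nabla_w\tilde{Q}_h\varphi-\mathcal{Q}_h\nabla\varphi$ and $\mathcal{Q}_h\nabla\varphi-\nabla\varphi$ (which are $O(h\|\varphi\|_2)$ and do fit your strategy), and then to combine the two leftover leading parts, $\ell_{23}=\sum_{T}(\nabla(Q_0u-u),A\nabla\varphi)_T$ from $\ell_2$ and $\ell_{34}=\sum_{T}\langle Q_b(Q_0u)-Q_bu,\,AQ_b(\nabla\varphi\cdot\bn)\rangle_{\partial T}$ from $\ell_3$: integrating $\ell_{23}$ by parts gives $-\sum_T(Q_0u-u,\nabla\cdot(A\nabla\varphi))_T+\sum_T\langle Q_0u-u,A\nabla\varphi\cdot\bn\rangle_{\partial T}$, and the boundary term --- still only $O(h^k)$ on its own --- cancels against $\ell_{34}$ to leave $\sum_T\langle Q_0u-u,\,A\nabla\varphi\cdot\bn-AQ_b(\nabla\varphi\cdot\bn)\rangle_{\partial T}$, which is $O(h^{k+1}\|u\|_{k+1}\|\varphi\|_2)$. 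This cancellation, carried out in \eqref{proof_L2_8}--\eqref{proof_L2_10} and estimated in \eqref{proof_L2_20}, is the key idea missing from your plan; without it the argument does not close at order $h^{k+1}$.
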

\begin{proof}
	Using the triangular inequality leads to 
	\begin{align*}
		\|e_0\|\leqslant \sum_{T \in \mathcal{T}_h^n} \|u-\Pi_h u\|_T+\|\varepsilon_0\|\leqslant C h^{k+1}(\|u_1\|_{k+1,\Omega_1}+\|u_2\|_{k+1,\Omega_2})+\|\varepsilon_0\|.
	\end{align*}
	Next, we give the estimate of $\varepsilon_0$. Taking $v=\varepsilon_h$ in Eq.\eqref{error_equation_1} yields 
	\begin{eqnarray}\label{proof_L2_1_IEI}
		\begin{split}
			\sum_{T \in \mathcal{T}_h^n}(\beta \nabla(\Pi_h \varphi), \nabla \varepsilon_h)_T+\sum_{T \in \mathcal{T}_h^I}(\beta \nabla_w \widetilde{Q}_h \varphi,\varepsilon_h)_T=(\varepsilon_0,\varepsilon_0)-\widetilde{\phi}_{\varphi}(\varepsilon_h).
		\end{split}
	\end{eqnarray}
	Thus, we have
	\begin{align}\label{proof_L2_2_IEI}
		\begin{split}
			\|\varepsilon_0\|^2=a^n(\Pi_h \varphi, \varepsilon_h)+a_s^I(\widetilde{Q}_h \varphi,\varepsilon_h)-s(\widetilde{Q}_h \varphi,\varepsilon_h)+\widetilde{\phi}_{\varphi}(\varepsilon_h).
		\end{split}
	\end{align}
	Choosing $v=I_h \varphi \in V_h^0$ in Eq.(\ref{error_equation_varepsilon_h}) leads to
	\begin{eqnarray}\label{proof_L2_23_IEI}
		a^n(\varepsilon_h,\Pi_h\varphi)+a_s^I(\varepsilon_h,\widetilde{Q}_h  \varphi)=s( \widetilde{Q}_h u,\widetilde{Q}_h \varphi)-\widetilde{\phi}_u(I_h \varphi).
	\end{eqnarray}	
	Therefore, we get
	\begin{eqnarray}\label{proof_L2_25}
		\begin{split}
			\begin{split}
				\|\varepsilon_0\|^2=&s( \widetilde{Q}_h u,\widetilde{Q}_h \varphi)-\widetilde{\phi}_u(I_h \varphi)-s(\widetilde{Q}_h \varphi,\varepsilon_h)+\widetilde{\phi}_{\varphi}(\varepsilon_h).
			\end{split}
		\end{split}
	\end{eqnarray}
	By Lemma \ref{H1estimate}, we obtain
	\begin{align}\label{proof_L2_4}
		\begin{split}
			|-s(\widetilde{Q}_h \varphi,\varepsilon_h)+\widetilde{\phi}_{\varphi}(\varepsilon_h)|\leqslant C  \frac{\beta_2^2}{\beta_1} h \|\varphi\|_{2} \|\varepsilon_h\|_{1,h}.
		\end{split}
	\end{align}
	For $s(\widetilde{Q}_h u, \widetilde{Q}_h \varphi)$, according to the Cauchy-Schwarz inequality, the definition of the $L^2$ projection operator $Q_b$, the trace inequality, and the projection inequality \eqref{projectorestimate1}, we get
	\begin{align}\label{proof_L2_5}
		&|s(\widetilde{Q}_h u, \widetilde{Q}_h \varphi)|\nonumber\\
		\leqslant&\left|\sum_{T \in \mathcal{T}_h^I} h_T^{-1}\langle \beta(Q_b(Q_0 u)-Q_b u),  Q_b(Q_0 \varphi)-Q_b \varphi \rangle_{\partial T \cap \mathcal{E}_h^I}  \right|\nonumber\\
		&+\left|\sum_{T \in \mathcal{T}_h^I} h_T^{-1}\langle \beta (Q_b(Q_0 u)-Q_b (\Pi_h u)),  Q_b(Q_0 \varphi)-Q_b (\Pi_h \varphi) \rangle_{\partial T \cap \mathcal{E}_h^n}  \right|\nonumber\\
		\leqslant& C \beta_2 \left( \sum_{T \in \mathcal{T}_h^I} h_T^{-1}\|Q_0 u-u \|_{\partial T \cap \mathcal{E}_h^I}^2  \right)^{\frac{1}{2}} \left( \sum_{T \in \mathcal{T}_h^I} h_T^{-1}\|Q_0 \varphi - \varphi \|_{\partial T \cap \mathcal{E}_h^I}^2 \right)^{\frac{1}{2}}\\
		&+ C \beta_2 \left( \sum_{T \in \mathcal{T}_h^I} h_T^{-1}\|Q_0 u-u \|_{\partial T \cap \mathcal{E}_h^n}^2+h_T^{-1}\|\Pi_h u - u \|_{\partial T \cap \mathcal{E}_h^n}^2  \right)^{\frac{1}{2}}\nonumber\\
		&\left( \sum_{T \in \mathcal{T}_h^I} h_T^{-1}\|Q_0 \varphi - \varphi \|_{\partial T \cap \mathcal{E}_h^n}^2+h_T^{-1}\|\Pi_h \varphi - \varphi \|_{\partial T \cap \mathcal{E}_h^n}^2 \right)^{\frac{1}{2}}\nonumber\\
		\leqslant & C \beta_2 h^{k+1} (\|u_1\|_{k+1,\Omega_1}+\|u_2\|_{k+1,\Omega_2}) \|\varphi\|_{2}.\nonumber
	\end{align}
	Next, we estimate each of the term $\widetilde{\phi}_u(I_h \varphi)$.
	
	(1) For $\ell_1(u,\widetilde{Q}_h \varphi)$, we have the following estimate:
	\begin{align}\label{proof_L2_6}
		|\ell_1(u,\widetilde{Q}_h \varphi)|
		\leqslant&\left|\sum_{T \in \mathcal{T}_h^I} \langle Q_b(Q_0 \varphi)-Q_b \varphi, \beta \mathcal{Q}_h(\nabla u) \cdot \bn - \beta \nabla u \cdot \bn \rangle_{\partial T \cap \mathcal{E}_h^I }
		\right|\nonumber\\
		&+\left|\sum_{T \in \mathcal{T}_h^I} \langle Q_b(Q_0 \varphi)-Q_b (\Pi_h \varphi), \beta \mathcal{Q}_h(\nabla u)  \cdot \bn - \beta \nabla u \cdot \bn \rangle_{\partial T \cap \mathcal{E}_h^n}
		\right|\\
		\leqslant &C  \left(\sum_{T \in \mathcal{T}_h^I}\|Q_0 \varphi - \varphi \|_{\partial T }^2+\sum_{T \in \mathcal{T}_h^I}\|\Pi_h \varphi - \varphi \|_{\partial T \cap \mathcal{E}_h^n}^2  \right)^{\frac{1}{2}}\nonumber\\
		&\quad \left(\sum_{T \in \mathcal{T}_h^I} \| \beta \mathcal{Q}_h(\nabla u)  \cdot \bn - \beta \nabla u \cdot \bn \|_{\partial T}^2  \right)^{\frac{1}{2}}\nonumber\\
		\leqslant& C\beta_2  h^{k+1}(\|u_1\|_{k+1,\Omega_1}+\|u_2\|_{k+1,\Omega_2})\|\varphi\|_2.\nonumber
	\end{align}
	(2) For $\ell_4(u, \widetilde{Q}_h \varphi)$, using the fact that $\sum_{T \in \mathcal{T}_h^I} \langle Q_b(Q_0 \varphi)- Q_0 \varphi, Q_b(\beta\nabla u \cdot \bn) \rangle_{\partial T}=0$ leads to
	\begin{align}\label{proof_L2_7}
		&|\ell_4(u, \widetilde{Q}_h \varphi)|\nonumber\\ =&\left|\sum_{T \in \mathcal{T}_h^I} \langle Q_b(Q_0 \varphi)- Q_0 \varphi, \beta\nabla u \cdot \bn \rangle_{\partial T} \right|\nonumber\\
		=&\left|\sum_{T \in \mathcal{T}_h^I} \langle Q_b(Q_0 \varphi)- Q_0\varphi, \beta\nabla u \cdot \bn - Q_b(\beta\nabla u \cdot \bn)  \rangle_{\partial T} \right|\nonumber\\
		=&\left|\sum_{T \in \mathcal{T}_h^I} \langle Q_b(Q_0 \varphi)- Q_b\varphi+Q_b\varphi-\varphi+\varphi- Q_0\varphi, \beta\nabla u \cdot \bn - Q_b(\beta\nabla u \cdot \bn)  \rangle_{\partial T} \right|\\
		\leqslant & C\beta_2 \left( \sum_{T \in \mathcal{T}_h^I} 
		\|Q_0 \varphi- \varphi  \|_{\partial T}^2+\|Q_b \varphi- \varphi  \|_{\partial T}^2 \right)^{\frac{1}{2}} \left( \sum_{T \in \mathcal{T}_h^I} 
		\|\nabla u \cdot \bn - Q_b(\nabla u \cdot \bn) \|_{\partial T}^2 \right)^{\frac{1}{2}}\nonumber\\
		\leqslant&C\beta_2 h^{k+1}(\|u_1\|_{k+1,\Omega_1}+\|u_2\|_{k+1,\Omega_2})\|\varphi\|_2.\nonumber
	\end{align}
	(3) For  $\ell_2(u, \widetilde{Q}_h \varphi)$, $\widetilde{\ell}_3(u, \widetilde{Q}_h \varphi)$ and $\ell_6(u, \widetilde{Q}_h \varphi)$, we have
	\begin{eqnarray}\label{proof_L2_8}
		\begin{split}
			\ell_2(u, \widetilde{Q}_h \varphi) =& \sum_{T \in \mathcal{T}_h^I} \left( \nabla(Q_0 u -u), \beta\nabla_w \widetilde{Q}_h \varphi \right)_T\\
			=&\sum_{T \in \mathcal{T}_h^I} (\nabla(Q_0 u -u), \beta\nabla_w \widetilde{Q}_h\varphi - \beta\mathcal{Q}_h (\nabla \varphi) )_T\\
			&+\sum_{T \in \mathcal{T}_h^I} (\nabla(Q_0 u -u),  \beta\mathcal{Q}_h \nabla \varphi -\beta \nabla \varphi )_T\\ 
			&+\sum_{T \in \mathcal{T}_h^I} (\nabla(Q_0 u -u),  \beta \nabla \varphi )_T\\
			=&\ell_{21}(u, \varphi)+\ell_{22}(u, \varphi)+\ell_{23}(u, \varphi),
		\end{split}	
	\end{eqnarray}
	and
	\begin{align}\label{proof_L2_9}
		\widetilde{\ell}_3(u, \widetilde{Q}_h \varphi)=&\sum_{T \in \mathcal{T}_h^I} \langle Q_b(Q_0 u)- Q_{\delta} u, \beta \nabla_w \widetilde{Q}_h \varphi \cdot \bn \rangle_{\partial T}\nonumber\\
		=& \sum_{T \in \mathcal{T}_h^I} \langle
		Q_b(Q_0 u)- Q_{\delta} u, \beta \nabla_w \widetilde{Q}_h \varphi \cdot \bn - \beta \mathcal{Q}_h \nabla \varphi \cdot \bn  \rangle_{\partial T}\nonumber\\
		&+\sum_{T \in \mathcal{T}_h^I} \langle 	Q_b(Q_0 u)- Q_{\delta} u,  \beta \mathcal{Q}_h \nabla \varphi \cdot \bn - \beta \nabla \varphi \cdot \bn \rangle_{\partial T}\\
		&+\sum_{T \in \mathcal{T}_h^I} \langle Q_b(Q_0 u)- Q_{\delta} u, \beta \nabla \varphi \cdot \bn -\beta Q_b(\nabla \varphi \cdot \bn) \rangle_{\partial T}\nonumber\\
		&+\sum_{T \in \mathcal{T}_h^I} \langle Q_b(Q_0 u)- Q_{\delta} u, \beta Q_b(\nabla \varphi \cdot \bn) \rangle_{\partial T}\nonumber\\
		=&\ell_{31}(u, \varphi)+\ell_{32}(u, \varphi)+\ell_{33}(u, \varphi)+\ell_{34}(u, \varphi),\nonumber
	\end{align}
	and
	\begin{eqnarray}
		\begin{split}
			&\ell_6(u, \widetilde{Q}_h \varphi)\\
			=&\sum_{T \in \mathcal{T}_h^n}(\beta \nabla(\Pi_h u-u),\nabla(\Pi_h \varphi))_T \\
			=&\sum_{T \in \mathcal{T}_h^n}(\beta \nabla(\Pi_h u-u),\nabla(\Pi_h \varphi-\varphi))_T+\sum_{T \in \mathcal{T}_h^n}(\beta \nabla(\Pi_h u-u),\nabla\varphi)_T \\
			=&\ell_{61}(u, \varphi)+\ell_{62}(u, \varphi).
		\end{split}
	\end{eqnarray}
	Denote by $\theta(u,\varphi) = \ell_{23}(u, \varphi) - \ell_{34}(u, \varphi)+\ell_{62}(u, \varphi)$, then we have
	\begin{eqnarray*}
		\sum_{T \in \mathcal{T}_h^n}\langle \Pi_h u -u, \beta \nabla \varphi \cdot \bn \rangle_{\partial T } =-\sum_{e \in \mathcal{E}_h^n}\langle \Pi_h u -u, \beta \nabla \varphi \cdot \bn_e \rangle_e.
	\end{eqnarray*}
	Therefore, using integration by parts and the definition of $L^2$ projection operator $Q_b$ leads to 
	\begin{align*}
		\theta(u,\varphi)=& \sum_{T \in \mathcal{T}_h^I} -(Q_0 u -u, \nabla \cdot (\beta \nabla \varphi))_T
		+ \sum_{T \in \mathcal{T}_h^I} \langle Q_0 u -u, \beta \nabla \varphi \cdot \bn  \rangle_{\partial T}\\
		&-\sum_{T \in \mathcal{T}_h^I} \langle Q_0 u- u, \beta Q_b(\nabla \varphi \cdot \bn) \rangle_{\partial T \cap \mathcal{E}_h^I}\\
		&-\sum_{T \in \mathcal{T}_h^I} \langle Q_0 u- \Pi_h u, \beta Q_b(\nabla \varphi \cdot \bn) \rangle_{\partial T \cap \mathcal{E}_h^n}\\
		&-\sum_{T \in \mathcal{T}_h^n}(\Pi_h u -u,\beta \nabla \cdot ( \nabla \varphi))_T-\sum_{e \in \mathcal{E}_h^n} \langle \Pi_h u -u, \beta \nabla \varphi \cdot \bn_e  \rangle_e,\\
		=& \sum_{T \in \mathcal{T}_h^I} -(Q_0 u -u, \nabla \cdot (\beta \nabla \varphi))_T
		+ \sum_{T \in \mathcal{T}_h^I} \langle Q_0 u -u, \beta \nabla \varphi \cdot \bn - \beta Q_b(\nabla \varphi \cdot \bn) \rangle_{\partial T }\\
		&-\sum_{T \in \mathcal{T}_h^n}(\Pi_h u -u,\beta \nabla \cdot ( \nabla \varphi))_T+\sum_{e \in \mathcal{E}_h^n} \langle \Pi_h u -u,  \beta Q_b(\nabla \varphi \cdot \bn_e)-\beta \nabla \varphi \cdot \bn_e   \rangle_e.
	\end{align*}
	Therefore, we get
	\begin{eqnarray}\label{proof_L2_11}
		\begin{split}
			&\ell_2(u,\widetilde{Q}_h \varphi)-	\ell_3(u,\widetilde{Q}_h \varphi)+\ell_6(u, \widetilde{Q}_h \varphi)\\
			=&\ell_{21}(u, \varphi)+\ell_{22}(u, \varphi)-\ell_{31}(u, \varphi)-\ell_{32}(u, \varphi) -\ell_{33}(u, \varphi)+\ell_{61}(u, \varphi) +\theta(u, \varphi).
		\end{split}
	\end{eqnarray}
	Now, we estimate each term on the right side of the above equation. 
	
	(i) For $\ell_{21}(u, \varphi)$, according to the Cauchy-Schwarz inequality and the projection inequality \eqref{projectorestimate1}, we have
	\begin{eqnarray}\label{proof_L2_12}
		\begin{split}
			|\ell_{21}(u, \varphi)|
			=&\left| \sum_{T \in \mathcal{T}_h^I}(\nabla(Q_0 u -u), \beta\nabla_w \widetilde{Q}_h \varphi - \beta\mathcal{Q}_h \nabla \varphi)_T  \right|\\
			\leqslant & C\beta_2 \left(\sum_{T \in \mathcal{T}_h^I}\|\nabla(Q_0 u -u)\|_T^2 \right)^{\frac{1}{2}} \left( \sum_{T \in \mathcal{T}_h^I} \| \nabla_w \widetilde{Q}_h \varphi - \mathcal{Q}_h \nabla \varphi \|_T^2\right)^{\frac{1}{2}}\\
			\leqslant & C\beta_2 h^k(\|u_1\|_{k+1,\Omega_1}+\|u_2\|_{k+1,\Omega_2}) \left( \sum_{T \in \mathcal{T}_h^I} \| \nabla_w \widetilde{Q}_h \varphi - \mathcal{Q}_h \nabla \varphi \|_T^2\right)^{\frac{1}{2}}.
		\end{split}
	\end{eqnarray}
	For any ${\bf{q}} \in \nabla \mathcal{V}_k(T)$, it follows from Eq.(\ref{weak_gradient_exchange_2}), the Cauchy-Schwarz inequality, the trace inequality, Lemma \ref{inverse_interface_element}, the projection inequality \eqref{projectorestimate1} and the interpolation estimate that
	\begin{eqnarray}\label{proof_L2_26}
		\begin{split}
			&\left|( \nabla_w \widetilde{Q}_h \varphi -  \mathcal{Q}_h \nabla \varphi, \beta {\bf{q}})_T\right|\\
			=&\Big|(\nabla(Q_0 \varphi - \varphi), \beta{\bf{q}})_T- \langle Q_b(Q_0 \varphi)- Q_{\delta}\varphi, \beta {\bf{q}} \cdot \bn \rangle_{\partial T \cap \mathcal{E}_h^I }\Big| \\
			\leqslant &C \beta_2\|\nabla(Q_0 \varphi - \varphi)\|_T \|{\bf{q}}\|_T +C \|Q_0 \varphi-\varphi  \|_{\partial T} \| \beta {\bf{q}} \cdot \bn\|_{\partial T }\\
			&+C \|\Pi_h \varphi-\varphi  \|_{\partial T \cap \mathcal{E}_h^n} \| \beta {\bf{q}} \cdot \bn\|_{\partial T \cap \mathcal{E}_h^n }\\
			\leqslant & C\beta_2 h\|\varphi\|_2\|{\bf{q}}\|_T.
		\end{split}
	\end{eqnarray}
	Taking ${\bf{q}}=  \nabla_w \widetilde{Q}_h \varphi -  \mathcal{Q}_h \nabla \varphi $ in Eq.(\ref{proof_L2_26}) leads to 
	\begin{eqnarray}\label{proof_L2_14}
		\|\nabla_w \widetilde{Q}_h \varphi -  \mathcal{Q}_h \nabla \varphi\|_T \leqslant C \frac{\beta_2}{\beta_1} h\|\varphi\|_2.
	\end{eqnarray}
	Substituting Eq.(\ref{proof_L2_14}) into Eq.(\ref{proof_L2_12}) yields 
	\begin{eqnarray}\label{proof_L2_15}
		|\ell_{21}(u,\varphi)|\leqslant C\frac{\beta_2^2}{\beta_1} h^{k+1}(\|u_1\|_{k+1,\Omega_1}+\|u_2\|_{k+1,\Omega_2})\|\varphi\|_2.
	\end{eqnarray}
	
	(ii) For $\ell_{22}(u, \varphi)$, by the Cauchy-Schwarz inequality and the projection inequalities \eqref{projectorestimate1}-\eqref{projectorestimate2}, we have
	\begin{eqnarray}\label{proof_L2_16}
		\begin{split}
			|\ell_{22}(u, \varphi)|
			=&\left|\sum_{T \in \mathcal{T}_h^I} (\nabla(Q_0 u -u ), \beta \mathcal{Q}_h \nabla \varphi - \beta\nabla \varphi)_T  \right|\\
			\leqslant & C \beta_2 \left(\sum_{T \in \mathcal{T}_h^I} \|\nabla(Q_0 u -u )\|_T^2 \right)^{\frac{1}{2}}
			\left(\sum_{T \in \mathcal{T}_h^I} \| \mathcal{Q}_h \nabla \varphi - \nabla \varphi\|_T^2 \right)^{\frac{1}{2}}\\
			\leqslant & C\beta_2 h^{k+1}(\|u_1\|_{k+1,\Omega_1}+\|u_2\|_{k+1,\Omega_2})\|\varphi\|_2.
		\end{split}
	\end{eqnarray}
	
	(iii) For $\ell_{31}(u, \varphi)$, according to the Cauchy-Schwarz inequality, the trace inequality, the {projection inequality \eqref{projectorestimate1}}, Lemma \ref{inverse_interface_element} and the estimate \eqref{proof_L2_14}, we obtain
	\begin{eqnarray}\label{proof_L2_17}
		\begin{split}
			&|\ell_{31}(u, \varphi)|\\
			=&\left|\sum_{T \in \mathcal{T}_h^I} 
			\langle Q_b(Q_0 u) - Q_{\delta} u, \beta \nabla_w \widetilde{Q}_h \varphi \cdot \bn - \beta \mathcal{Q}_h \nabla \varphi \cdot \bn
			\rangle_{\partial T} \right|\\
			\leqslant & C  \left(\sum_{T \in \mathcal{T}_h^I} \| Q_0 u - u\|_{\partial T}^2 +\sum_{T \in \mathcal{T}_h^I}\|\Pi_h u - u \|_{\partial T \cap \mathcal{E}_h^n}^2 \right)^{\frac{1}{2}}\\ 
			&\left(\sum_{T \in \mathcal{T}_h^I} \|  \beta \nabla_w \widetilde{Q}_h \varphi \cdot \bn - \beta \mathcal{Q}_h \nabla \varphi \cdot \bn  \|_{\partial T}^2 \right)^{\frac{1}{2}}\\
			\leqslant & C\frac{\beta_2^2}{\beta_1} h^{k+1}(\|u_1\|_{k+1,\Omega_1}+\|u_2\|_{k+1,\Omega_2})\|\varphi\|_2.
		\end{split}
	\end{eqnarray}
	Similarly, we get
	\begin{eqnarray}\label{proof_L2_18}
		\begin{split}
			&|\ell_{32}(u, \varphi)|\nonumber\\
			=&\left|\sum_{T \in \mathcal{T}_h^I}\langle Q_b(Q_0 u)- Q_{\delta} u,
			\beta \mathcal{Q}_h \nabla \varphi \cdot \bn - \beta \nabla \varphi \cdot \bn \rangle_{\partial T}  \right|\nonumber\\
			\leqslant & C \beta_2 \left( \sum_{T \in \mathcal{T}_h^I} \|Q_0 u -u \|_{\partial T}^2+\sum_{T \in \mathcal{T}_h^I}\|\Pi_h u - u \|_{\partial T \cap \mathcal{E}_h^n}^2 \right)^{\frac{1}{2}}\left( \sum_{T \in \mathcal{T}_h^I} \| \mathcal{Q}_h \nabla \varphi -  \nabla \varphi  \|_{\partial T}^2\right)^{\frac{1}{2}} \nonumber\\
			\leqslant &  C \beta_2 h^{k+1}(\|u_1\|_{k+1,\Omega_1}+\|u_2\|_{k+1,\Omega_2})\|\varphi\|_2,\nonumber
		\end{split}
	\end{eqnarray}
	and
	\begin{align}\label{proof_L2_19}
		&|\ell_{33}(u, \varphi)|
		=\left|\sum_{T \in \mathcal{T}_h^I} \langle Q_b(Q_0 u)- Q_{\delta} u, \beta \nabla \varphi \cdot \bn -\beta Q_b(\nabla \varphi \cdot \bn) \rangle_{\partial T}\right|\nonumber\\
		\leqslant & C \left(\sum_{T \in \mathcal{T}_h^I} \|Q_0 u - u\|_{\partial T}^2+\sum_{T \in \mathcal{T}_h^I}\|\Pi_h u - u \|_{\partial T \cap \mathcal{E}_h^n}^2 \right)^{\frac{1}{2}}\left(\sum_{T \in \mathcal{T}_h^I} \|\beta \nabla \varphi \cdot \bn -\beta Q_b(\nabla \varphi \cdot \bn)\|_{\partial T}^2 \right)^{\frac{1}{2}}\\
		\leqslant & C \beta_2 h^{k+1}(\|u_1\|_{k+1,\Omega_1}+\|u_2\|_{k+1,\Omega_2})\|\varphi\|_2,\nonumber
	\end{align}
	and
	\begin{align}\label{proof_L2_22}
		|\ell_{61}(u,\varphi)|
		=&\left| \sum_{T \in \mathcal{T}_h^n}(\beta \nabla(\Pi_h u-u),\nabla(\Pi_h \varphi-\varphi))_T   \right|\nonumber\\
		\leqslant & C \beta_2 \sum_{T \in \mathcal{T}_h^n} \|\nabla(\Pi_h u-u)\|_T \|\nabla(\Pi_h \varphi-\varphi)\|_T\\
		\leqslant& C \beta_2 \left(  \sum_{T \in \mathcal{T}_h^n} \|\nabla(\Pi_h u-u)\|_T^2  \right)^{\frac{1}{2}}\left(  \sum_{T \in \mathcal{T}_h^n} \|\nabla(\Pi_h \varphi-\varphi)\|_T^2  \right)^{\frac{1}{2}}\nonumber\\
		\leqslant&C \beta_2 h^{k+1}(\|u_1\|_{k+1,\Omega_1}+\|u_2\|_{k+1,\Omega_2})\|\varphi\|_2.\nonumber
	\end{align}
	For $\theta(u,\varphi)$, we obtain
	\begin{align}\label{proof_L2_20}
		&|\theta(u,\varphi)|\nonumber\\
		\leqslant&\left|\sum_{T \in \mathcal{T}_h^I} -(Q_0 u -u, \nabla \cdot (\beta \nabla \varphi))_T \right| +\left| \sum_{T \in \mathcal{T}_h^I} \langle Q_0 u -u, \beta \nabla \varphi \cdot \bn - \beta Q_b(\nabla \varphi \cdot \bn) \rangle_{\partial T}\right|\nonumber\\
		&+\left|\sum_{T \in \mathcal{T}_h^n}(\Pi_h u -u,\beta \nabla \cdot ( \nabla \varphi))_T \right|+\left|\sum_{e \in \mathcal{E}_h^n} \langle \Pi_h u -u,  \beta Q_b(\nabla \varphi \cdot \bn_e)-\beta \nabla \varphi \cdot \bn_e   \rangle_e \right|\nonumber\\
		\leqslant& C\left(\sum_{T \in \mathcal{T}_h^I} \|Q_0 u -u\|_T^2 \right)^{\frac{1}{2}}\left(\sum_{T \in \mathcal{T}_h^I} \|\nabla \cdot (\beta \nabla \varphi)\|_{T}^2 \right)^{\frac{1}{2}}\\
		&+C\left(\sum_{T \in \mathcal{T}_h^I} \|Q_0 u -u\|_{\partial T}^2 \right)^{\frac{1}{2}}\left(\sum_{T \in \mathcal{T}_h^I} \|\beta \nabla \varphi \cdot \bn - \beta Q_b(\nabla \varphi \cdot \bn)\|_{\partial T}^2 \right)^{\frac{1}{2}}\nonumber\\
		&+C\left(\sum_{T \in \mathcal{T}_h^n} \|\Pi_h u -u\|_T^2 \right)^{\frac{1}{2}}\left(\sum_{T \in \mathcal{T}_h^n} \|\beta\nabla \cdot ( \nabla \varphi)\|_{T}^2 \right)^{\frac{1}{2}}\nonumber\\
		&+C\left(\sum_{e \in \mathcal{E}_h^n} \|\Pi_h u -u\|_{\partial T}^2 \right)^{\frac{1}{2}}\left(\sum_{e \in \mathcal{E}_h^n} \|\beta \nabla \varphi \cdot \bn - \beta Q_b(\nabla \varphi \cdot \bn)\|_{\partial T}^2 \right)^{\frac{1}{2}}\nonumber\\
		\leqslant & C\beta_2 h^{k+1}(\|u_1\|_{k+1,\Omega_1}+\|u_2\|_{k+1,\Omega_2})\|\varphi\|_2.\nonumber
	\end{align}
	(5)For $\ell_5(u,\widetilde{Q}_h \varphi)$, we use the fact that $\sum_{e \in \mathcal{E}_h^n} \langle Q_b(\beta \nabla u \cdot \bn_e), \Pi_h \varphi - Q_b(\Pi_h \varphi) \rangle_e =0$ and the definition of $L^2$ projection operator $Q_b$ to get
	\begin{align}\label{proof_L2_28}
		&|\ell_5(u,\widetilde{Q}_h \varphi)|\nonumber\\
		=&\left| \sum_{e \in \mathcal{E}_h^n} \langle \beta\nabla u \cdot \bn_e, \Pi_h \varphi - Q_b (\Pi_h \varphi)\rangle_e\right| \nonumber\\
		=&\left|\sum_{e \in \mathcal{E}_h^n} \langle \beta\nabla u \cdot \bn_e - Q_b(\beta\nabla u \cdot \bn_e), \Pi_h \varphi - Q_b (\Pi_h \varphi)\rangle_e\right|\\
		\leqslant &C \left(\sum_{e \in \mathcal{E}_h^n} \|\beta\nabla u \cdot \bn_e - Q_b(\beta\nabla u \cdot \bn_e) \|_e^2 \right)^{\frac{1}{2}}\nonumber\\
		&\left(\sum_{e \in \mathcal{E}_h^n} \|\Pi_h \varphi - \varphi \|_e^2
		+\| \varphi - Q_b \varphi \|_e^2
		+\|Q_b \varphi - Q_b (\Pi_h \varphi) \|_e^2
		\right)^{\frac{1}{2}}\nonumber\\
		\leqslant & C\beta_2 h^{k+1}(\|u_1\|_{k+1,\Omega_1}+\|u_2\|_{k+1,\Omega_2})\| \varphi \|_{2}.\nonumber
	\end{align}
	Combining estimates (\ref{proof_L2_5})-(\ref{proof_L2_28}) and \eqref{proof_L2_25}-\eqref{proof_L2_4} yields 
	\begin{eqnarray}\label{proof_L2_21}
		\begin{split}
			\|\varepsilon_0\|^2\leqslant& C\frac{\beta_2^2}{\beta_1} h\|\varphi\|_{2}\|\varepsilon_h\|_{1,h}+C\frac{\beta_2^2}{\beta_1} h^{k+1}(\|u_1\|_{k+1,\Omega_1}+\|u_2\|_{k+1,\Omega_2})\|\varphi\|_{2}\\
			\leqslant& C{(\frac{\beta_2^4}{\beta_1^4} + \frac{\beta_2^2}{\beta_1^2})}  h^{k+1}(\|u_1\|_{k+1,\Omega_1}+\|u_2\|_{k+1,\Omega_2}) \|\varepsilon_0\|\\
			\leqslant & C {\frac{\beta_2^4}{\beta_1^4}} h^{k+1}(\|u_1\|_{k+1,\Omega_1}+\|u_2\|_{k+1,\Omega_2})\|\varepsilon_0\|.
		\end{split}
	\end{eqnarray}
	Therefore, we have
	\begin{align*}
		\|\varepsilon_0\|
		\leqslant C \left({\frac{\beta_2}{\beta_1}}\right)^4 h^{k+1}(\|u_1\|_{k+1,\Omega_1}+\|u_2\|_{k+1,\Omega_2}).
	\end{align*}
	The proof of the theorem is completed.
\end{proof}

\section{The Numerical Experiments}
In this section, we give two numerical experiments to demonstrate that the efficiency of the proposed immersed {\color{black}skeletal finite element} scheme. In these examples, we compute the following relative errors in the $H^1$ norm and $L^2$ norm.
\begin{eqnarray*}
	\|\bar{e}_h\|_{1,h}:= \frac{\|e_h\|_{1,h}}{\|P_hu\|_{1,h}},\quad \|\bar{e}_0\|:= \dfrac{\|e_0\|}{\|P_hu\|}.
\end{eqnarray*}
In two examples, we take $\beta_1=1$ and $\beta_2=\{1,10,100,1000\}$.

\begin{example}\label{li1}
	Consider the interface problem in the square domain $[-1,1]\times[-1,1]$. The interface function is as follows:
	$$
	x^2+y^2=\frac{1}{3}.
	$$
	Take two subdomains: $\Omega_1=\left\{(x,y)\in \mathbb{R}^2: x^2+y^2<\dfrac{1}{3}\right\}$ and  $\Omega_2=\left\{(x,y)\in \mathbb{R}^2: x^2+y^2>\dfrac{1}{3} \right\}$. The exact solution is
	$$
	u = \left\{\begin{array}{lc}
		\dfrac{1}{\beta_1}\cos(\pi (x^2+y^2)), & (x,y)\in \Omega_1, \\
		\dfrac{1}{\beta_2}\cos(\pi (x^2+y^2))+\dfrac{1}{2}(\dfrac{1}{\beta_1}-\dfrac{1}{\beta_2}), & (x,y)\in \Omega_2.
	\end{array}
	\right.
	$$
\end{example}

\begin{figure}[H]
	\centering
	\begin{minipage}[t]{0.33\linewidth}
		\centering
		\includegraphics[width=1.1\linewidth]{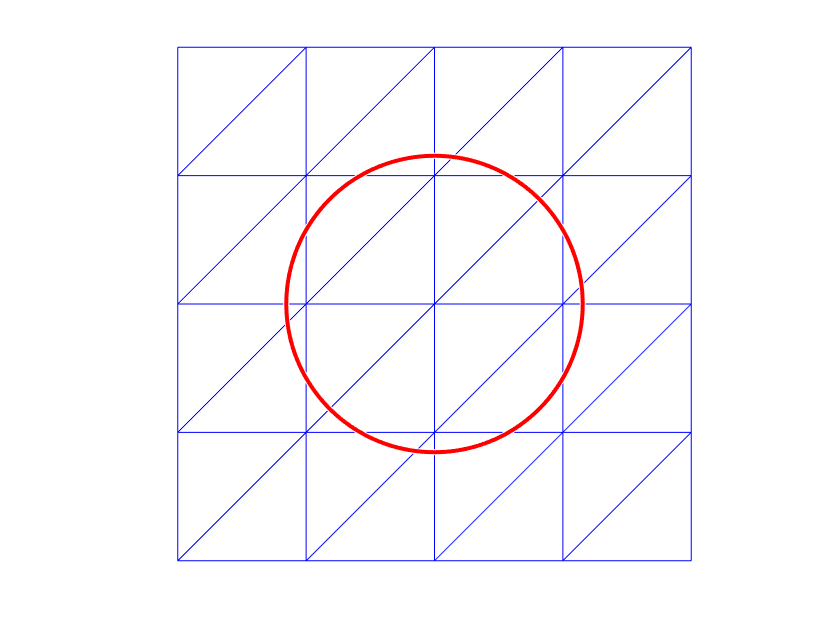}
	\end{minipage}%
	\begin{minipage}[t]{0.33\linewidth}
		\centering
		\includegraphics[width=1.1\linewidth]{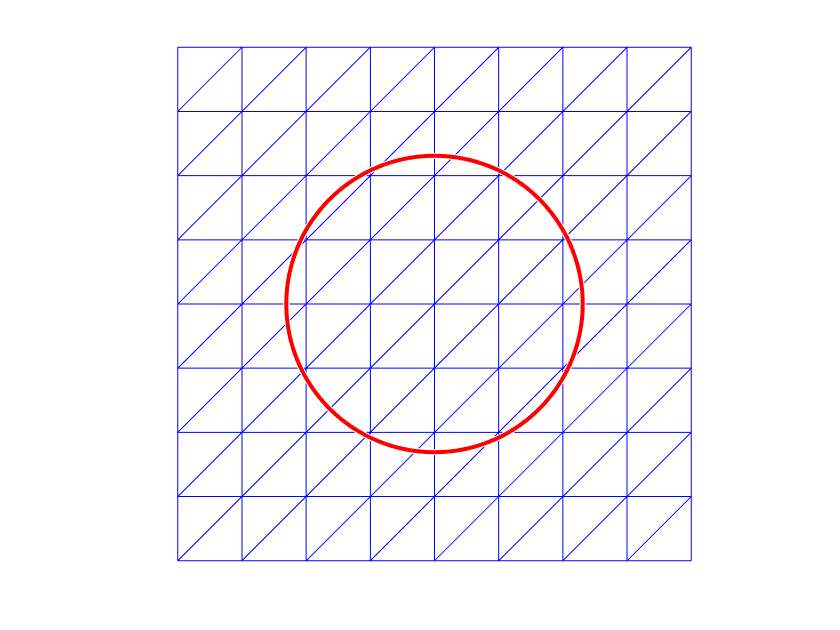}
	\end{minipage}
	\begin{minipage}[t]{0.33\linewidth}
		\centering \includegraphics[width=1.1\linewidth]{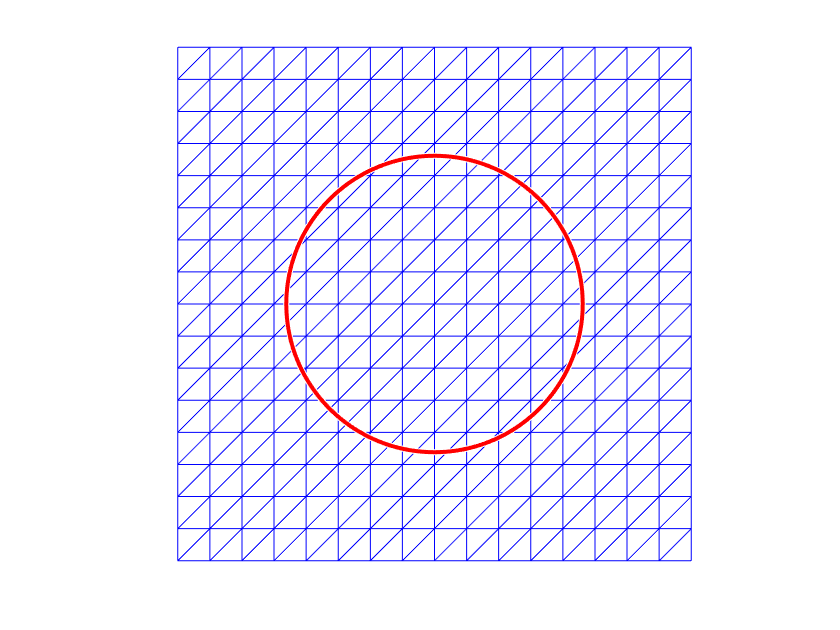}
	\end{minipage}
	\caption{ When $n$ = 4,8,16, the meshes of domain $\Omega$ in Example \ref{li1}.}
	\label{partition_2_li1_IEI}
	\vspace*{-0.5cm}
\end{figure}

	\begin{figure}[H]
		\centering
		\begin{minipage}[t]{0.49\linewidth}
			\centering
			\includegraphics[width=0.8\linewidth]{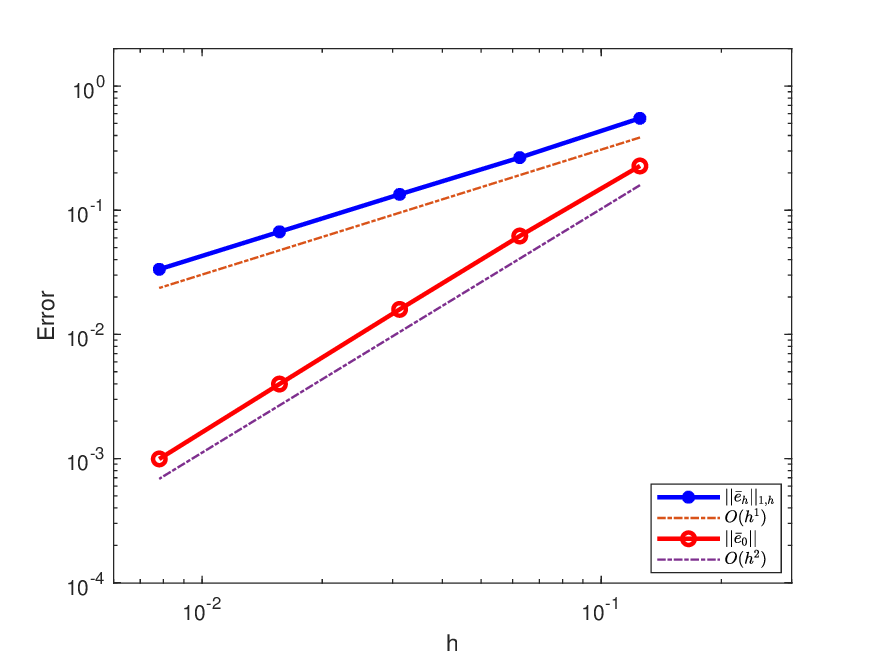}
		\end{minipage}
		\begin{minipage}[t]{0.49\linewidth}
			\centering
			\includegraphics[width=0.8\linewidth]{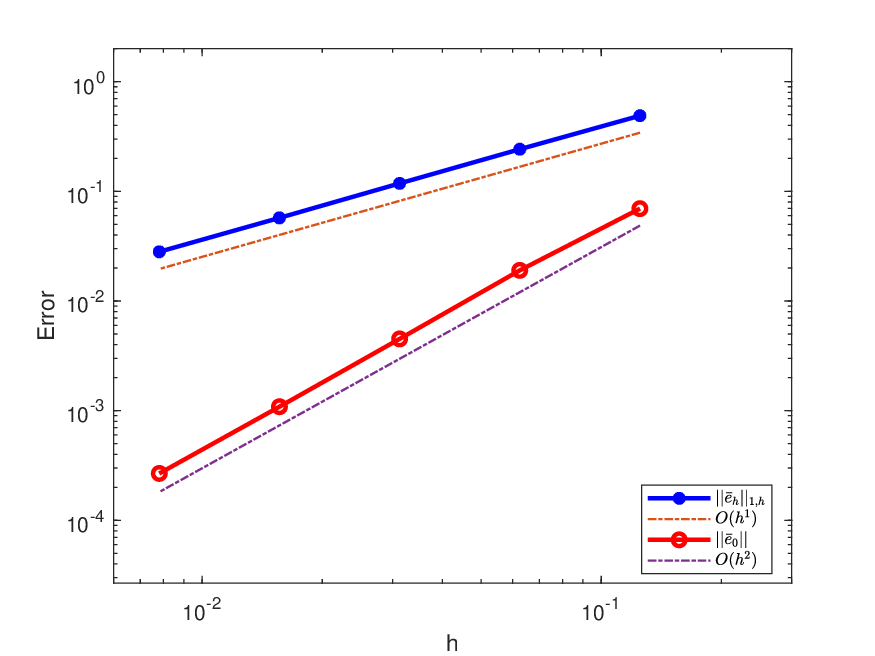}
		\end{minipage}\\
		\begin{minipage}[t]{0.49\linewidth}
			\centering \includegraphics[width=0.8\linewidth]{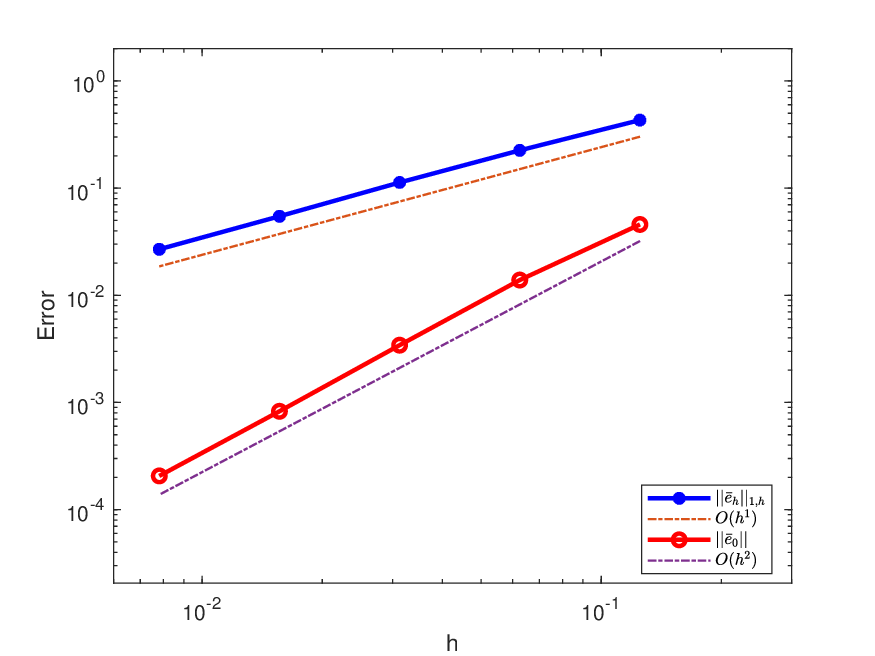}
		\end{minipage}
		\begin{minipage}[t]{0.49\linewidth}
			\centering \includegraphics[width=0.8\linewidth]{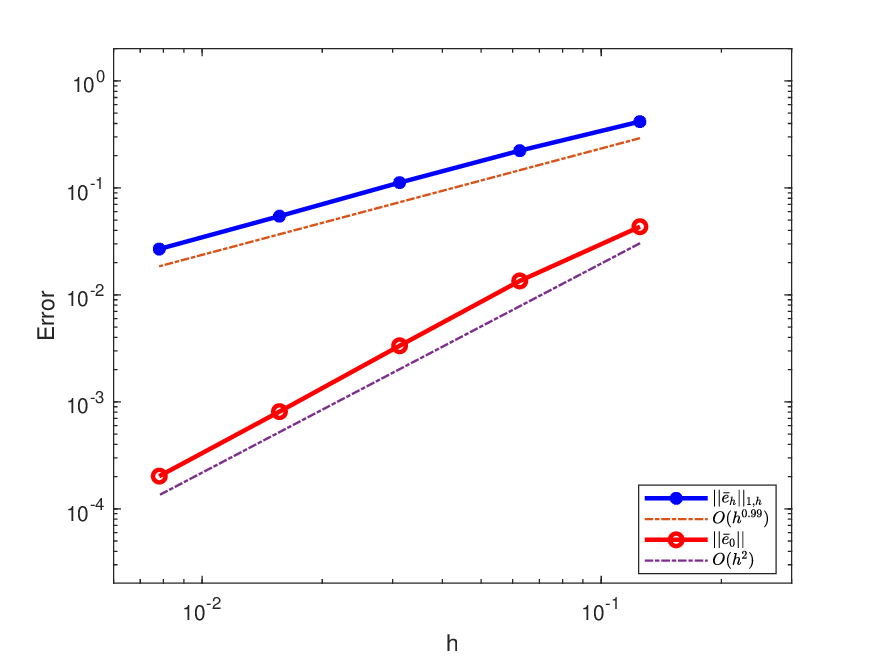}
		\end{minipage}
		\caption{The errors and convergence orders of the example \ref{li1} when we use $P_1$  {\color{black}skeletal finite element} (Upper left: $\beta_2=1$, Upper right: $\beta_2=10$, Lower left: $\beta_2=100$, Lower right: $\beta_2=1000$ ).}
		\label{Figure1_IEI}
		\vspace*{-0.3cm}
	\end{figure}
	
	We implement this example on triangular meshes (see Figure \ref{partition_2_li1_IEI}) and use $P_1$ to $P_2$ { {\color{black}skeletal finite} elements along with Lagrange elements to solve the second order elliptic interface problems.  The numerical results are shown in Figures \ref{Figure1_IEI} - \ref{Figure2_IEI}. From these figures, it's evident that the convergence orders are $O(h^k)$ in the $H^1$ norm and $O(h^{k+1})$ in the $L^2$ norm, respectively.
	
	\begin{figure}[H]
		\centering
		\begin{minipage}[t]{0.49\linewidth}
			\centering
			\includegraphics[width=0.8\linewidth]{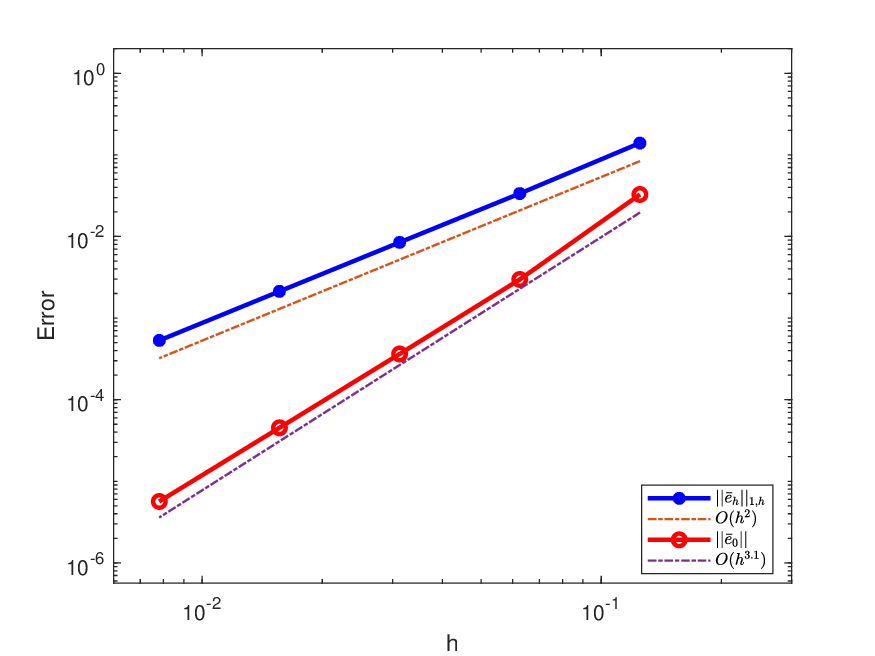}
		\end{minipage}%
		\begin{minipage}[t]{0.49\linewidth}
			\centering
			\includegraphics[width=0.8\linewidth]{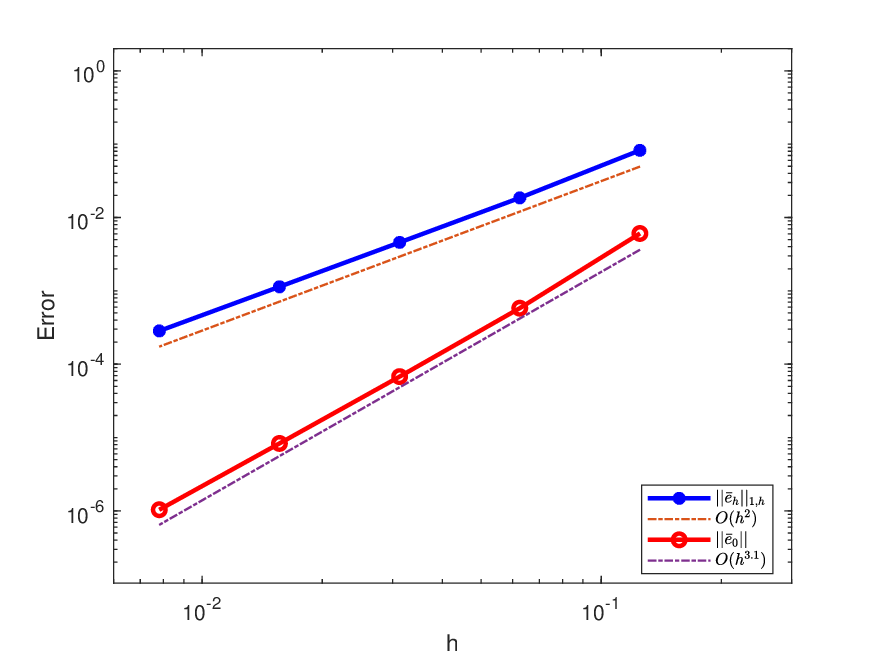}
		\end{minipage}\\
		\begin{minipage}[t]{0.49\linewidth}
			\centering \includegraphics[width=0.8\linewidth]{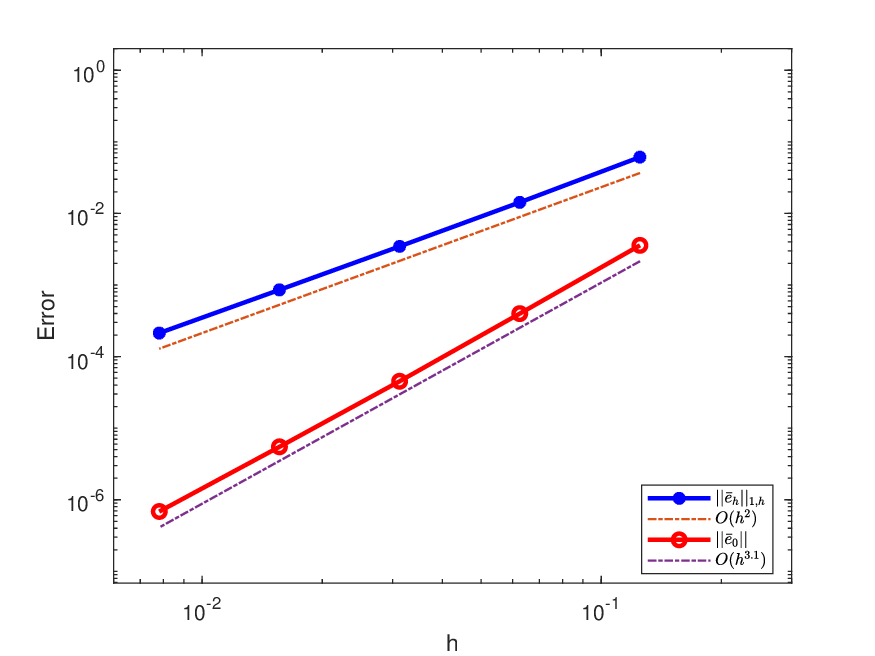}
		\end{minipage}
		\begin{minipage}[t]{0.49\linewidth}
			\centering \includegraphics[width=0.8\linewidth]{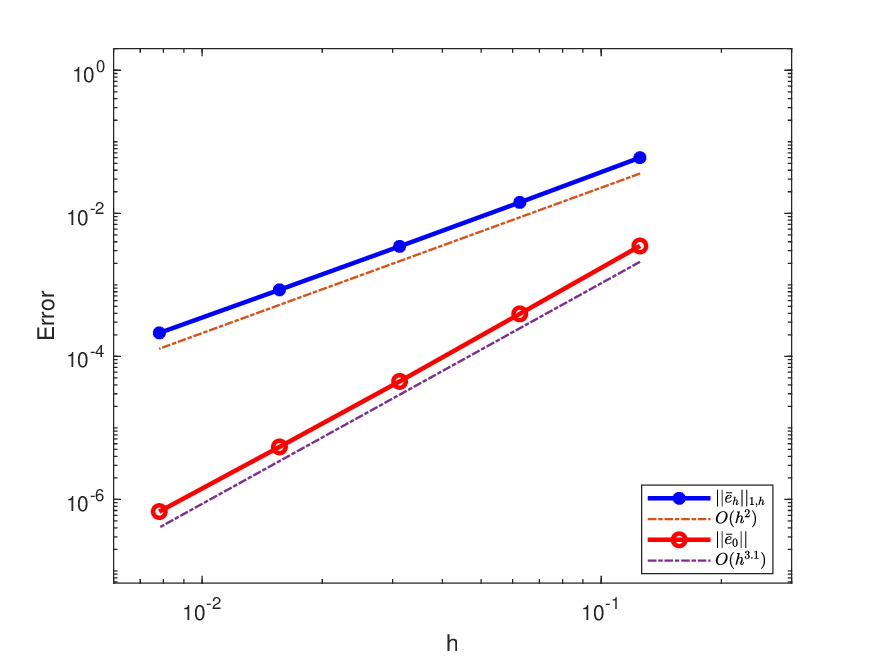}
		\end{minipage}
		\caption{The error and convergence orders of the example \ref{li1} when we use $P_2$ {\color{black}skeletal finite element} (Upper left: $\beta_2=1$, Upper right: $\beta_2=10$, Lower left: $\beta_2=100$, Lower right: $\beta_2=1000$ ).}
		\label{Figure2_IEI}
		\vspace*{-0.3cm}
	\end{figure}

	\begin{example}\label{li2}
		Consider the interface problem in the square domain $[0.6,1.6]\times[0.21,1.21]$. The interface is as follows:
		$$
		(x^2-y^2)^2-4x^2y^2+\frac{1}{2}=0.
		$$
		Let $\Omega_1=\left\{(x,y)\in \Omega| L(x,y)>0 \right\}$ and  $\Omega_2=\left\{(x,y)\in \Omega| L(x,y)<0 \right\}$. Next, we define 	$\widetilde{L}$  to be the harmonic conjugate of $L$ given by $\widetilde{L}=4xy(x^2-y^2)$. And we choose $f$ and $g$ such that 
		$$
		u(x,y)=\frac{1}{\beta}L(x,y)+\widetilde{L}(x,y)+\frac{1}{\beta}L(x,y)\widetilde{L}(x,y).
		$$
	\end{example}

	\begin{figure}[H]
		\centering
		\begin{minipage}[t]{0.33\linewidth}
			\centering
			\includegraphics[width=1.1\linewidth]{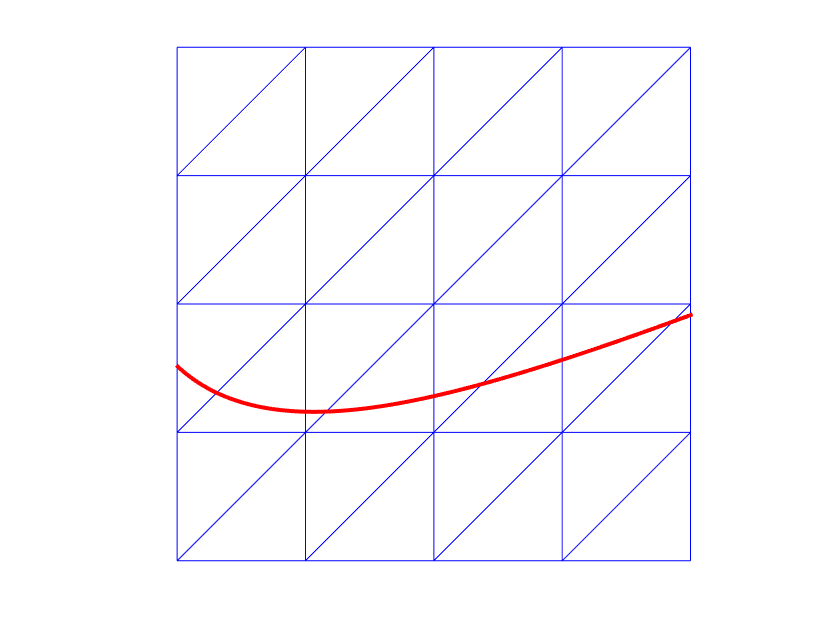}
		\end{minipage}%
		\begin{minipage}[t]{0.33\linewidth}
			\centering
			\includegraphics[width=1.1\linewidth]{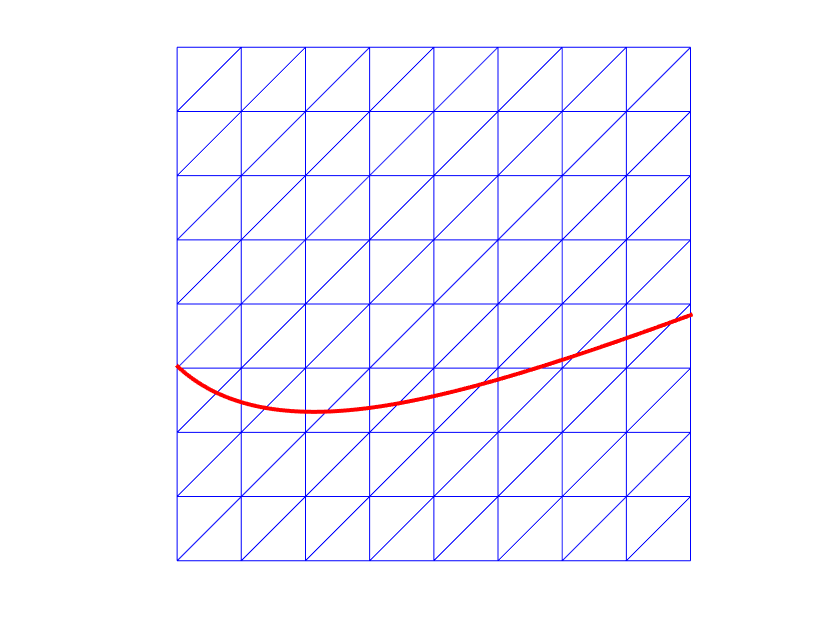}
		\end{minipage}
		\begin{minipage}[t]{0.33\linewidth}
			\centering \includegraphics[width=1.1\linewidth]{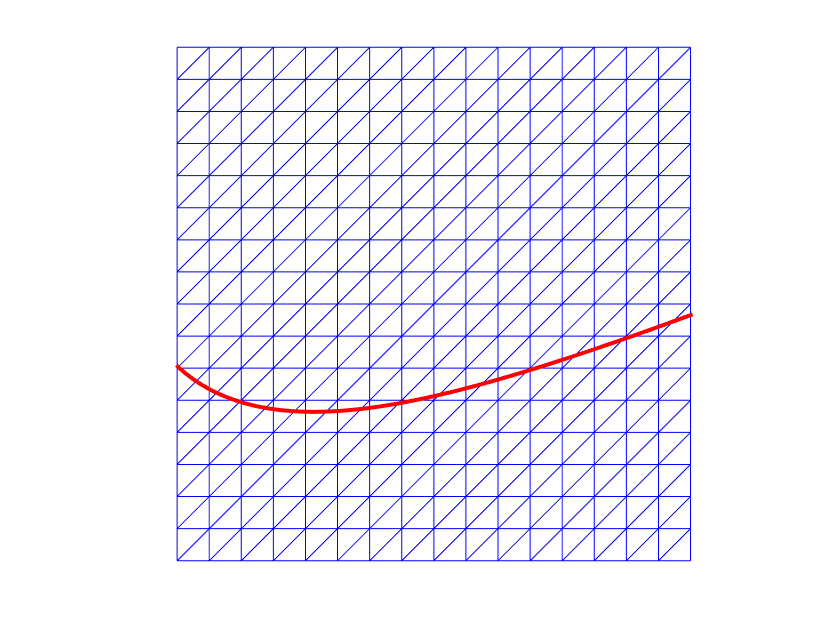}
		\end{minipage}
		\caption{ When $n$ = 4,8,16, the meshes of domain $\Omega$ in Example \ref{li2}.}
		\label{partition_2_li2_IEI}
	\end{figure}

	\begin{figure}[H]
		\centering
		\begin{minipage}[t]{0.49\linewidth}
			\centering
			\includegraphics[width=0.8\linewidth]{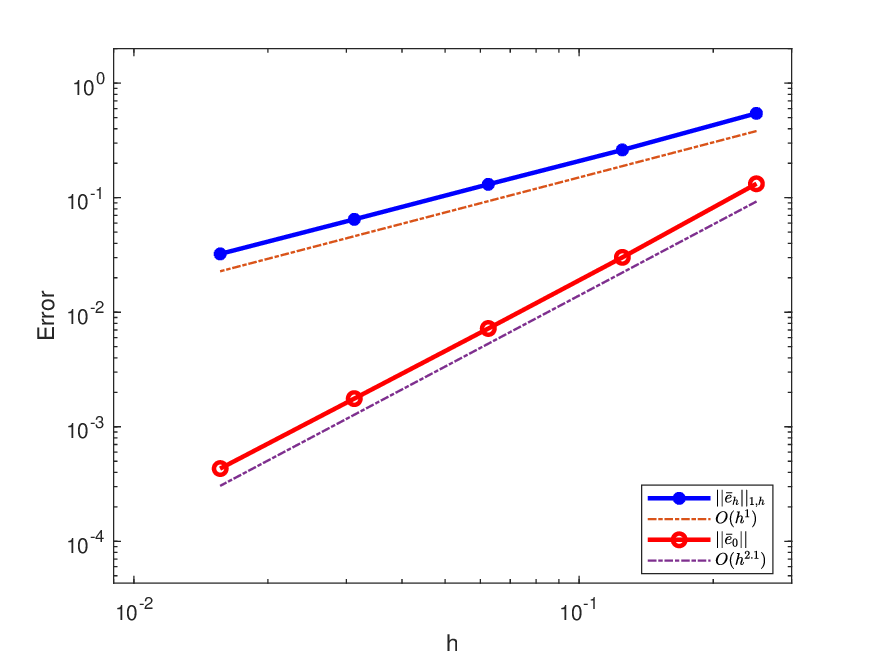}
		\end{minipage}%
		\begin{minipage}[t]{0.49\linewidth}
			\centering
			\includegraphics[width=0.8\linewidth]{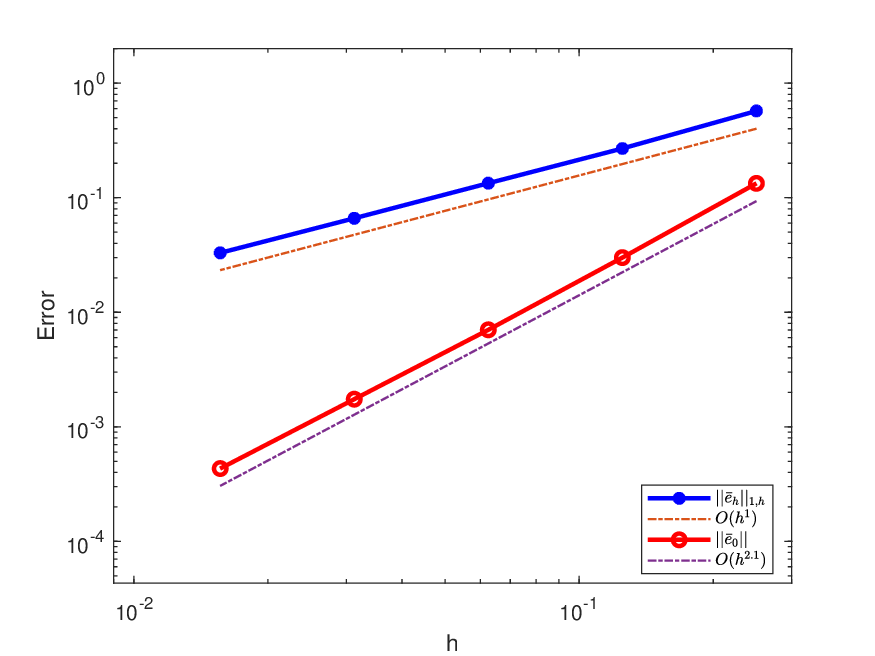}
		\end{minipage}\\
		\begin{minipage}[t]{0.49\linewidth}
			\centering \includegraphics[width=0.8\linewidth]{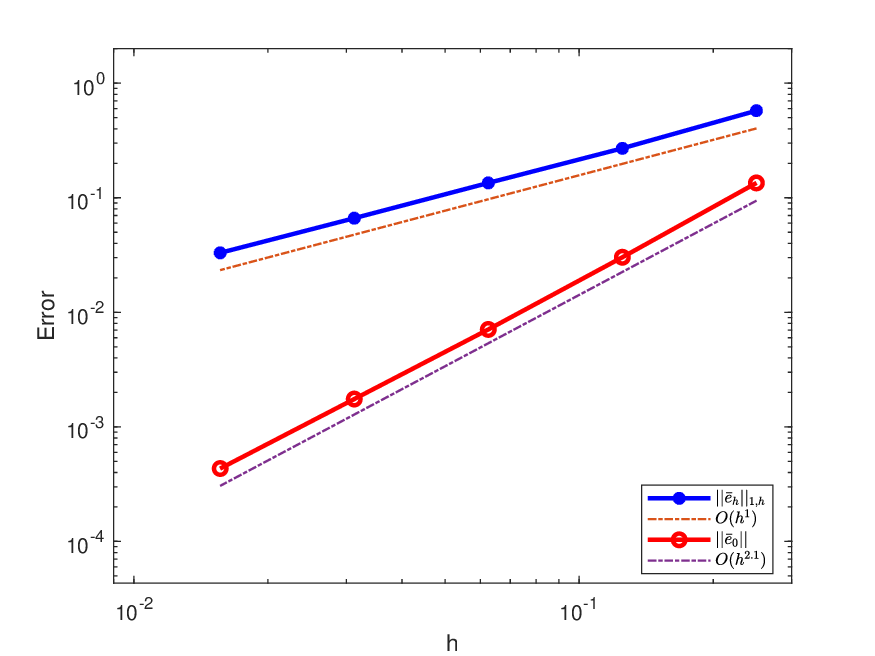}
		\end{minipage}
		\begin{minipage}[t]{0.49\linewidth}
			\centering \includegraphics[width=0.8\linewidth]{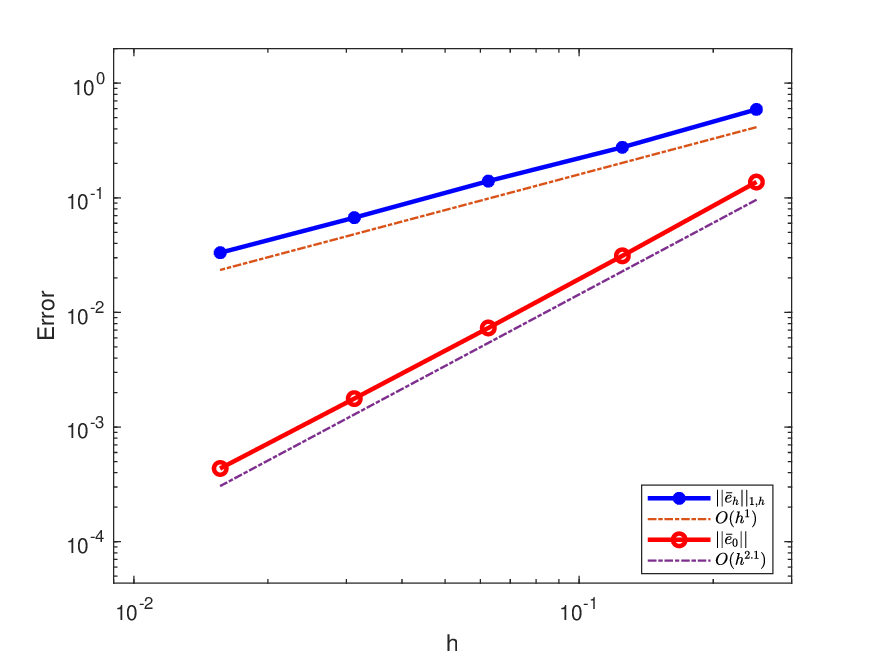}
		\end{minipage}
		\caption{The error and convergence orders of the example \ref{li2} when we use $P_1$ {\color{black}skeletal finite element}  (Upper left: $\beta_2=1$, Upper right: $\beta_2=10$, Lower left: $\beta_2=100$, Lower right: $\beta_2=1000$ ).}
		\label{Figure3_IEI}
		\vspace*{-0.3cm}
	\end{figure}
	
	Note that the tangential derivative of the exact solution $u$ and jump in the coefficient $\beta$ are non-zero on the interface $\Gamma$. Furthermore, it can be verified that $u$ satisfies the Laplacian extended jump conditions. The numerical results are shown in Figures \ref{Figure3_IEI} - \ref{Figure4_IEI} based on the triangular meshes (see Figure \ref{partition_2_li2_IEI} ). From these figures, we observe that the convergence orders remain optimal in both the $H^1$ norm and $L^2$ norm. These results demonstrate the effectiveness of the proposed numerical scheme for solving second order elliptic interface problems on unfitted meshes.
	
	\begin{figure}[H]
		\centering
		\begin{minipage}[t]{0.49\linewidth}
			\centering
			\includegraphics[width=0.8\linewidth]{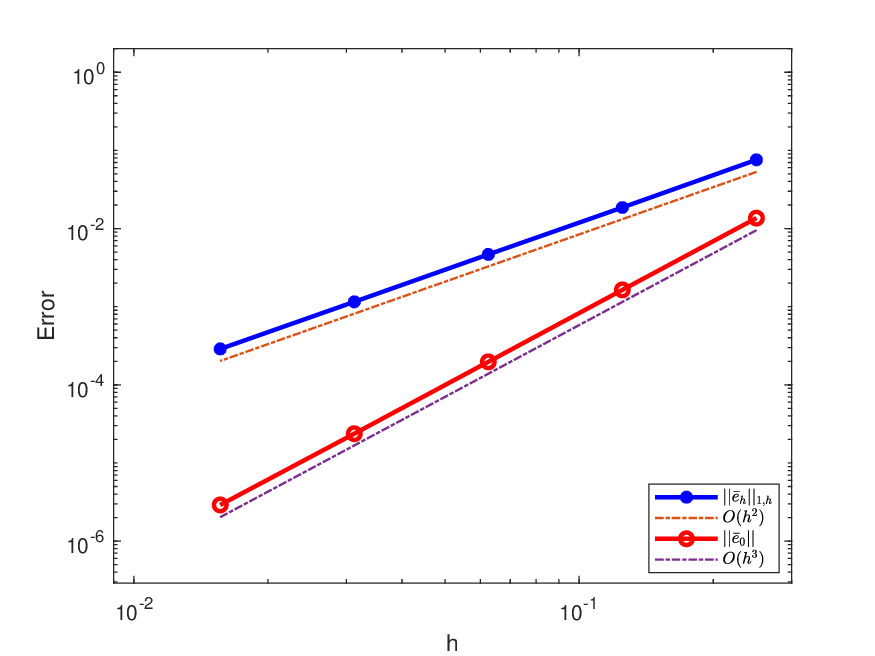}
		\end{minipage}%
		\begin{minipage}[t]{0.49\linewidth}
			\centering
			\includegraphics[width=0.8\linewidth]{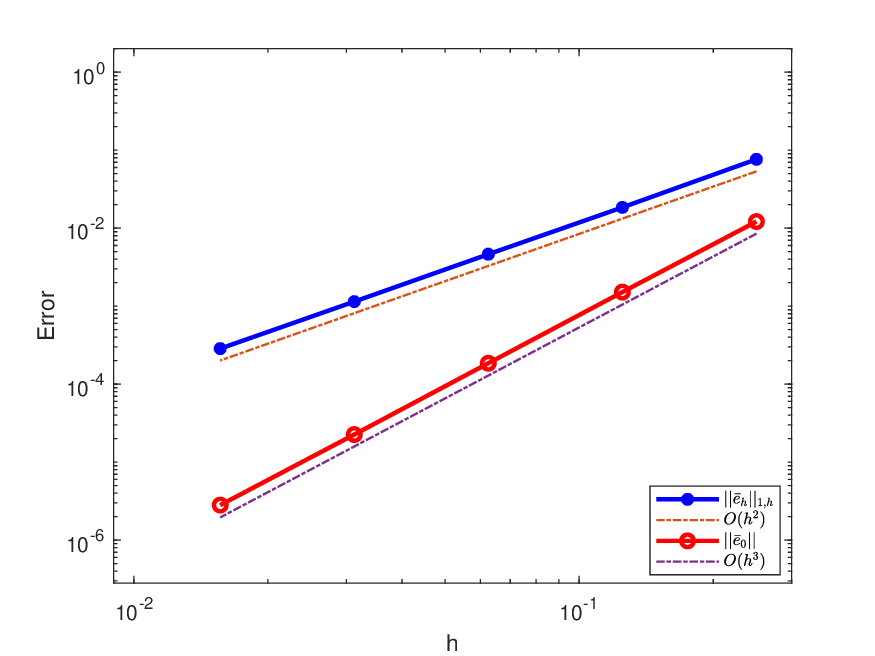}
		\end{minipage}\\
		\begin{minipage}[t]{0.49\linewidth}
			\centering \includegraphics[width=0.8\linewidth]{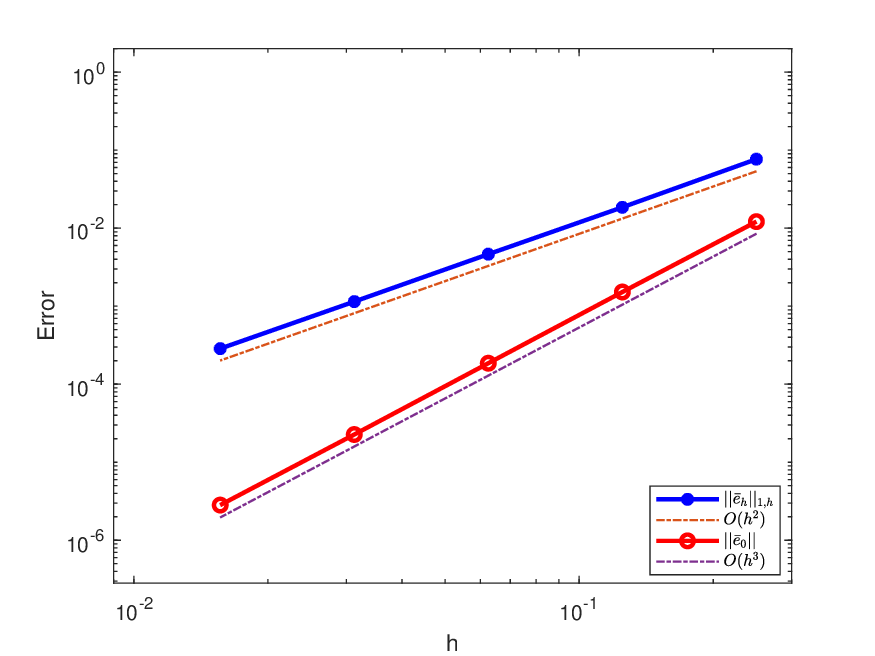}
		\end{minipage}
		\begin{minipage}[t]{0.49\linewidth}
			\centering \includegraphics[width=0.8\linewidth]{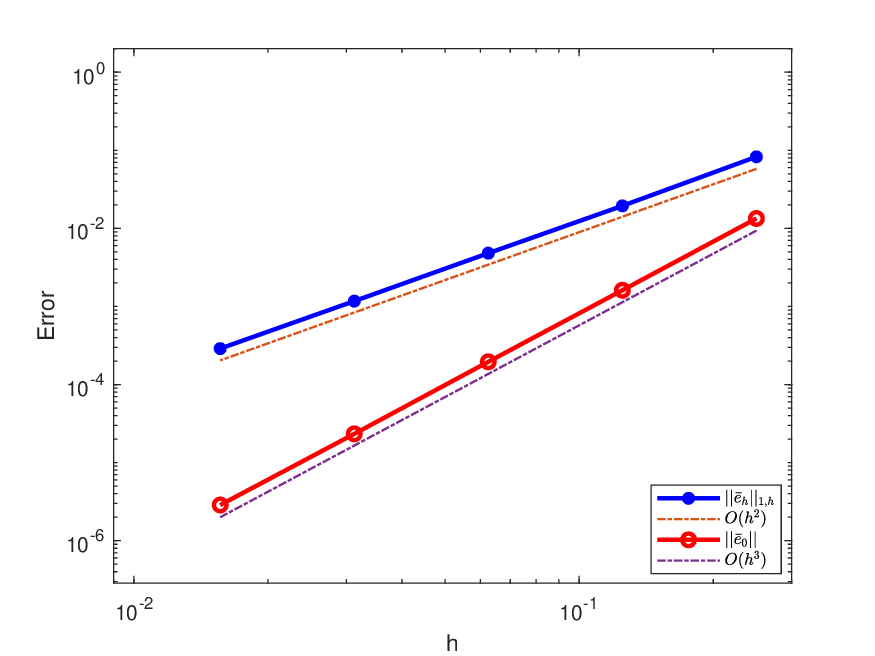}
		\end{minipage}
		\caption{The error and convergence orders of the example \ref{li2} when we use $P_2$ WG element (Upper left: $\beta_2=1$, Upper right: $\beta_2=10$, Lower left: $\beta_2=100$, Lower right: $\beta_2=1000$ ).}
		\label{Figure4_IEI}
	\end{figure}

	\section{Conclusion}In this paper, we use the {\color{black}skeletal finite element method}  and the standard finite element method to solve the second order elliptic interface problems on unfitted meshes. We employ IFE functions that precisely satisfy the jump conditions in the interior of interface elements. On the boundaries of these elements, we define a new boundary function space. A high order numerical scheme is proposed. And we prove that numerical solutions converge to the exact solutions at optimal rates. Additionally, numerical results from various examples demonstrate that the optimal convergence orders are achieved in both the $H^1$ norm and the $L^2$ norm. These results align with the theoretical analysis.

	\appendix
	\section{Some Important Inequalities}
	In this appendix, we present several important inequalities that are crucial for our proof. 
	\begin{lemma}[Trace inequality]\label{Trace_inequality}
		For the shape-regular partition $\mathcal{T}_h$, there exists a constant $C$ such that for any $T \in \mathcal{T}_h$ and $e \in \partial T$, the following inequality holds 
		\begin{eqnarray}\label{trace_inequality}
			\|w\|_e^2 \leqslant C\left(h_T^{-1}\|w\|_T^2 + h_T\|\nabla w\|_T^2 \right),
		\end{eqnarray}
		where $w \in H^1(T)$ is any function.
	\end{lemma}
	
	\begin{lemma}[Inverse inequality]
		For the shape-regular partition $\mathcal{T}_h$, there exists a constant $C$ such that for any $T \in \mathcal{T}_h$ and $e \in \partial T$, we have 
		\begin{eqnarray}\label{Inverse_inequality}
			\|\nabla \varphi\|_T \leqslant Ch_T^{-1}\|\varphi\|_T,
		\end{eqnarray}
		where $\varphi$ is any piecewise polynomial on $\mathcal{T}_h$.
	\end{lemma}

	{Referring to the proof of \cite[Theorem 2]{IFE_2025}, we conclude that the following estimates for triangular elements.}
	\begin{lemma}\cite{IFE_2025}\label{inverse_interface_element}
		For the interface element $T \in \mathcal{T}_h^I$ and $w \in \mathcal{V}_k(T)$, we have
		\begin{eqnarray}\label{inequality_1}
			\|\beta \nabla w \|_{\partial T}^2 \leqslant C\beta_2h_T^{-1}\|\nabla w\|_T^2.
		\end{eqnarray}
	\end{lemma}
	
	{By combining \cite[Theorem 2]{Frenet_Serret_2024} with \cite[Lemma 15]{IFE_2025}, we derive the following projection estimates.}
	\begin{lemma}\cite{Frenet_Serret_2024}
		For $\rho \in H^{l+1}(\Omega)$ with 
		$0 \leqslant l \leqslant k$ and $s=0,1$, we have the following estimates hold true
		\begin{align}
			\sum_{T \in \mathcal{T}_h^I} h_T^{2s} \| \rho- Q_0 \rho\|_{T,s}^2 \leqslant& Ch^{2(l+1)}\|\rho\|_{l+1}^2,\label{projectorestimate1}\\
			\sum_{T \in \mathcal{T}_h} h_T^{2s} \|\nabla \rho- \mathcal{Q}_h(\nabla \rho) \|_{T,s}^2 \leqslant& C h^{2l}\|\rho\|_{l+1}^2\label{projectorestimate2}.
		\end{align}
		Here $C$ represents a positive constant that remains independent of both the mesh size $h$, the relative location of the interface and the function involved in the above estimates.
	\end{lemma}
	
	\begin{lemma}
		For any $v_h=\{v_0, v_b\} \in V_h$, there exists a positive constant that independent of viscosity coefficient $\beta$ and interface location such that 
		\begin{eqnarray}\label{gradient_energy}
			\sum_{T \in \mathcal{T}_h^I} \|\nabla v_0\|_T^2 \leqslant C \left({\frac{\beta_2}{\beta_1}}\right)^2\| v_h\|_{1,h}^2.
		\end{eqnarray}
	\end{lemma}
	\begin{proof}
		For $T \in \mathcal{T}_h^I$ and ${\bf{q}} \in \nabla \mathcal{V}_k$, using Eq.(\ref{Def_weak_gradient_1}) leads to
		\begin{eqnarray}\label{gradient_energy_proof_1}
			(\beta \nabla_w v, {\bf{q}})_T=(\beta\nabla v_0, {\bf{q}})_T-\langle Q_b v_0 -v_b, \beta{\bf{q}} \cdot \bn \rangle_{\partial T}.
		\end{eqnarray}
		Taking ${\bf{q}} = \nabla v_0$ in Eq.\eqref{gradient_energy_proof_1} and applying the Cauchy-Schwarz inequality and Lemma \ref{inverse_interface_element} leads to
		\begin{align}\label{gradient_energy_proof_2}
			\beta_1\sum_{T \in \mathcal{T}_h^I} \|\nabla v_0\|_T^2\leqslant&	\sum_{T \in \mathcal{T}_h^I}(\beta \nabla v_0,\nabla v_0)_T\nonumber\\
			=&\sum_{T \in \mathcal{T}_h^I}(\beta\nabla_w v, \nabla v_0)_T+\sum_{T \in \mathcal{T}_h^I}\langle Q_b v_0 -v_b, \beta\nabla v_0 \cdot \bn \rangle_{\partial T}\nonumber\\
			\leqslant&C \sum_{T \in \mathcal{T}_h^I}  \left(\beta_2\|\nabla_w v\|_T\|\nabla v_0\|_T+\|Q_b v_0 -v_b\|_{\partial T}\|\beta \nabla v_0 \cdot \bn\|_{\partial T}\right)\nonumber\\
			\leqslant & C \beta_2 \left( \sum_{T \in \mathcal{T}_h^I} \|\nabla_w v\|_T^2 \right)^{\frac{1}{2}}\left( \sum_{T \in \mathcal{T}_h^I} \|\nabla v_0\|_T^2 \right)^{\frac{1}{2}}\nonumber\\
			&+ C  \left( \sum_{T \in \mathcal{T}_h^I} h_T^{-1} \|Q_b v_0 -v_b\|_{\partial T}^2 \right)^{\frac{1}{2}}\left( \sum_{T \in \mathcal{T}_h^I} h_T \|\beta \nabla v_0 \cdot \bn\|_{\partial T}^2 \right)^{\frac{1}{2}}\\
			\leqslant& C \beta_2\left( \sum_{T \in \mathcal{T}_h^I} \|\nabla_w v\|_T^2 \right)^{\frac{1}{2}}\left( \sum_{T \in \mathcal{T}_h^I} \|\nabla v_0\|_T^2 \right)^{\frac{1}{2}}\nonumber\\
			&+ C \beta_2 \left( \sum_{T \in \mathcal{T}_h^I} h_T^{-1} \|Q_b v_0 -v_b\|_{\partial T}^2 \right)^{\frac{1}{2}}\left( \sum_{T \in \mathcal{T}_h^I} \|\nabla v_0 \|_{T}^2 \right)^{\frac{1}{2}}\nonumber\\
			\leqslant &  C \beta_2\left( \sum_{T \in \mathcal{T}_h^I} \|\nabla v_0 \|_{T}^2 \right)^{\frac{1}{2}} \|v_h\|_{1,h}.\nonumber
		\end{align}
		Thus, the estimate \eqref{gradient_energy} holds true. The proof of the lemma is completed.
	\end{proof}
	
	\begin{lemma}\label{H1estimate}
		For {$u \in PH^{k+1}(\Omega)$} that satisfies the interface conditions \eqref{Interface_condition_1}-\eqref{Laplacian_extend_interface_conditions}, we have the following estimates
		\begin{align}
			|\ell_1(u,v)| &\leqslant C \beta_2 h^k (\|u_1\|_{k+1,\Omega_1}+\|u_2\|_{k+1,\Omega_2}) \| v \|_{1,h},\label{H1estimate1}\\
			|\ell_2(u,v)| &\leqslant C \beta_2 h^k (\|u_1\|_{k+1,\Omega_1}+\|u_2\|_{k+1,\Omega_2}) \| v \|_{1,h},\label{H1estimate2}\\	
			|\widetilde{\ell}_3(u,v)| &\leqslant C \beta_2 h^k (\|u_1\|_{k+1,\Omega_1}+\|u_2\|_{k+1,\Omega_2}) \| v \|_{1,h},\label{H1estimate3}\\
			|\ell_3(u,v)| &\leqslant C \beta_2 h^k (\|u_1\|_{k+1,\Omega_1}+\|u_2\|_{k+1,\Omega_2}) \| v \|_{1,h},\label{H1estimate8}\\
			|\ell_4(u,v)| &\leqslant C \frac{\beta_2^2}{\beta_1} h^k (\|u_1\|_{k+1,\Omega_1}+\|u_2\|_{k+1,\Omega_2}) \| v \|_{1,h},\label{H1estimate4}\\
			|\ell_5(u,v)| &\leqslant C \beta_2 h^k (\|u_1\|_{k+1,\Omega_1}+\|u_2\|_{k+1,\Omega_2}) \| v \|_{1,h},\label{H1estimate6}\\
			|\ell_6(u,v)| &\leqslant C \beta_2 h^k (\|u_1\|_{k+1,\Omega_1}+\|u_2\|_{k+1,\Omega_2}) \| v \|_{1,h},\label{H1estimate7}\\
			|s(Q_hu,v)| &\leqslant C \beta_2 h^k (\|u_1\|_{k+1,\Omega_1}+\|u_2\|_{k+1,\Omega_2}) \| v \|_{1,h}\label{H1estimate5},\\
			|s(\widetilde{Q}_hu,v)| &\leqslant C \beta_2 h^k (\|u_1\|_{k+1,\Omega_1}+\|u_2\|_{k+1,\Omega_2}) \| v \|_{1,h}\label{H1estimate9}.
		\end{align}
	\end{lemma}
	\begin{proof}
		For $\ell_1(u,v)$, according to the Cauchy-Schwarz inequality, the trace inequality and the {projection inequality \eqref{projectorestimate2}}, we get
		\begin{align}\label{H1estimate_proof_1}
			|\ell_1(u,v)|
			=&\left|\sum_{T \in \mathcal{T}_h^I} \langle Q_bv_0 - v_b, \beta \mathcal{Q}_h (\nabla u) \cdot \bn - \beta \nabla u \cdot \bn \rangle_{\partial T}\right|\nonumber\\
			\leqslant& C  \left( \sum_{T \in \mathcal{T}_h^I} h_T^{-1}\|Q_bv_0 - v_b \|_{\partial T}^2 \right)^{\frac{1}{2}} \left( \sum_{T \in \mathcal{T}_h^I} h_T \|  \beta \mathcal{Q}_h (\nabla u) \cdot \bn - \beta \nabla u \cdot \bn\|_{\partial T}^2 \right)^{\frac{1}{2}}\\
			\leqslant& C \beta_2 \| v \|_{1,h}
			\left( \sum_{T \in \mathcal{T}_h^I} \|\mathcal{Q}_h \nabla u -\nabla u\|_T^2 + h_T^2 \| \nabla(\mathcal{Q}_h \nabla u -\nabla u)\|_T^2 \right)^{\frac{1}{2}}\nonumber\\
			\leqslant& C\beta_2 h^k (\|u_1\|_{k+1,\Omega_1}+\|u_2\|_{k+1,\Omega_2}) \| v \|_{1,h}.\nonumber
		\end{align}
		Similarly, for $\ell_2(u,v)$, it follows from the Cauchy-Schwarz inequality and the {projection inequality \eqref{projectorestimate1}} that
		\begin{align}\label{H1estimate_proof_2}
			|\ell_2(u,v)|=&\left|\sum_{T \in \mathcal{T}_h^I}(\nabla(Q_0 u)- \nabla u, \beta \nabla_w v)_T\right|\nonumber\\
			\leqslant & C\beta_2 \left(\sum_{T \in \mathcal{T}_h^I} \|\nabla (Q_0 u - u)\|_T^2 \right)^{\frac{1}{2}} \left(\sum_{T \in \mathcal{T}_h^I} \| \nabla_w v\|_T^2  \right)^{\frac{1}{2}}\\
			\leqslant & C \beta_2 h^k (\|u_1\|_{k+1,\Omega_1}+\|u_2\|_{k+1,\Omega_2}) \| v \|_{1,h}.\nonumber
		\end{align}
		For $\widetilde{\ell}_3(u,v)$, by Cauchy-Schwarz inequality, we have
		\begin{align}\label{H1estimate_proof_3_IEI}
			\begin{split}
				|\widetilde{\ell}_3(u,v)|=&\left| \sum_{T \in \mathcal{T}_h^I} \langle Q_b(Q_0 u) -Q_{\delta} u, \beta \nabla_w v \cdot \bn \rangle_{\partial T} \right|\\
				\leqslant & C \left( \sum_{T \in \mathcal{T}_h^I} \|Q_b(Q_0 u) -Q_{\delta} u\|_{\partial T}^2 \right)^{\frac{1}{2}} \left(\sum_{T \in \mathcal{T}_h^I} \|\beta \nabla_w v \cdot \bn \|_{\partial T}^2 \right)^{\frac{1}{2}}.
			\end{split}
		\end{align}
		For the non-interface edge $e \in \mathcal{E}_h^n$, it follows from the definition of the $L^2$ projection operator $Q_b$, the trace inequality and the projection inequality \eqref{projectorestimate1} that 
		\begin{eqnarray}\label{H1error_varepsilon_h_proof_7_IEI}
			\begin{split}
				&\sum_{e \in \mathcal{E}_h^n} \|Q_b(Q_0 u) -Q_{\delta} u\|_{e}^2\\
				= &\sum_{e \in \mathcal{E}_h^n} \|Q_b(Q_0 u) -Q_b^e(\Pi_h u)\|_{e}^2\\
				\leqslant & \sum_{e \in \mathcal{E}_h^n} \|Q_0 u -\Pi_h u\|_{e}^2\\
				\leqslant & 2  \sum_{e \in \mathcal{E}_h^n} \|Q_0 u - u\|_{e}^2+2\sum_{e \in \mathcal{E}_h^n} \|\Pi_h u - u\|_{e}^2 \\
				\leqslant & C\sum_{T \in \mathcal{T}_h^I} \left( h_T^{-1} \|Q_0 u - u\|_{T}^2 + h_T \|\nabla(Q_0 u - u) \|_T^2\right)\\
				&+C\sum_{T \in \mathcal{T}_h^n} \left( h_T^{-1} \|\Pi_h u - u\|_{T}^2 + h_T \|\nabla(\Pi_h u - u) \|_T^2\right)\\
				\leqslant& C h^{2k+1}(\|u_1\|_{k+1,\Omega_1}^2+\|u_2\|_{k+1,\Omega_2}^2).
			\end{split}
		\end{eqnarray}
		For  $e \in \mathcal{E}_h^I$, define $e=e_1 \cup e_2$. Similarly, we get
		\begin{align}\label{H1error_varepsilon_h_proof_8_IEI}
			&\sum_{e \in \mathcal{E}_h^I} \|Q_b(Q_0 u) -Q_{\delta} u\|_{e}^2\nonumber
			=\sum_{e \in \mathcal{E}_h^I} \|Q_b(Q_0 u-u) \|_{e}^2\nonumber\\
			\leqslant & \sum_{e \in \mathcal{E}_h^I} \|Q_0 u - u\|_{\partial T}^2\\
			\leqslant & \sum_{T \in \mathcal{T}_h^I} \Big( h_T^{-1} \|Q_0 u - u\|_{T}^2 + h_T \|\nabla(Q_0 u - u) \|_{T}^2 \Big)\nonumber\\
			\leqslant& C h^{2k+1}(\|u_1\|_{k+1,\Omega_1}^2+\|u_2\|_{k+1,\Omega_2}^2).\nonumber
		\end{align}	
		Substituting the estimates \eqref{H1error_varepsilon_h_proof_7_IEI}-\eqref{H1error_varepsilon_h_proof_8_IEI} into  \eqref{H1estimate_proof_3_IEI} and using Lemma \ref{inverse_interface_element} leads to
		\begin{align}\label{H1error_varepsilon_h_proof_9_IEI}
			&|\widetilde{\ell}_3(u,v)|\nonumber\\
			\leqslant& C\beta_2 h^{k+\frac{1}{2}}(\|u_1\|_{k+1,\Omega_1}+\|u_2\|_{k+1,\Omega_2})\left(\sum_{T \in \mathcal{T}_h^I} h_T^{-1} \|\nabla_w v\|_T^2 \right)^{\frac{1}{2}}\\
			\leqslant& C\beta_2 h^k (\|u_1\|_{k+1,\Omega_1}+\|u_2\|_{k+1,\Omega_2}) \| v \|_{1,h}.\nonumber
		\end{align}	
		Similarly, the following estimate holds true
		\begin{align*}
			|\ell_3(u,v)|\leqslant C\beta_2 h^k (\|u_1\|_{k+1,\Omega_1}+\|u_2\|_{k+1,\Omega_2}) \| v \|_{1,h}.
		\end{align*}	
		For $\ell_4(u,v)$, by the fact that $\sum_{T \in \mathcal{T}_h^I} \langle Q_b(\beta\nabla u \cdot \bn), v_0 -Q_b v_0 \rangle_{\partial T}=0$ and the Cauchy-Schwarz inequality, we obtain
		\begin{eqnarray}\label{H1estimate_proof_4}
			\begin{split}
				|\ell_4(u,v)|=&\left| \sum_{T \in \mathcal{T}_h^I} \langle \beta\nabla u \cdot \bn, v_0 -Q_b v_0 \rangle_{\partial T} \right|\\
				=&\left| \sum_{T \in \mathcal{T}_h^I} \langle \beta\nabla u \cdot \bn - Q_b(\beta\nabla u \cdot \bn), v_0 -Q_b v_0 \rangle_{\partial T} \right|\\
				\leqslant&C \left( \sum_{T \in \mathcal{T}_h^I} \|\beta\nabla u \cdot \bn - Q_b(\beta\nabla u \cdot \bn)\|_{\partial T}^2 \right)^{\frac{1}{2}} 
				\left( \sum_{T \in \mathcal{T}_h^I} \|v_0 -Q_b v_0 \|_{\partial T}^2 \right)^{\frac{1}{2}}.
			\end{split}
		\end{eqnarray}
		For $e \in \mathcal{E}_h^n$, according to the trace inequality and the {projection inequality \eqref{projectorestimate1}}, we get
		\begin{eqnarray}\label{H1estimate_proof_5}
			\begin{split}
				&\|\beta\nabla u \cdot \bn - Q_b^e(\beta\nabla u \cdot \bn)\|_e^2\\
				\leqslant& C 	\|\beta\nabla u \cdot \bn - Q_0^{k-1}(\beta\nabla u \cdot \bn)\|_e^2\\
				\leqslant& C h_T^{-1}\|\beta\nabla u \cdot \bn - Q_0^{k-1}(\beta\nabla u \cdot \bn)\|_{T}^2+ C h_T\|\nabla(\beta\nabla u \cdot \bn - Q_0^{k-1}(\beta\nabla u \cdot \bn))\|_{T}^2\\
				\leqslant & C\beta_2^2 h_T^{2k-1}(\|u_1\|_{k+1,\Omega_1}^2+\|u_2\|_{k+1,\Omega_2}^2).
			\end{split}
		\end{eqnarray}
		Similarly, for the interface edge $e \in \mathcal{E}_h^I$, we have
		\begin{eqnarray}\label{H1estimate_proof_6}
			\begin{split}
				\|\beta\nabla u \cdot \bn - Q_b(\beta\nabla u \cdot \bn)\|_e^2
				\leqslant C\beta_2^2 h_T^{2k-1} (\|u_1\|_{k+1,\Omega_1}^2+\|u_2\|_{k+1,\Omega_2}^2).
			\end{split}
		\end{eqnarray}
		By the estimate \eqref{gradient_energy}, we obtain
		\begin{eqnarray}\label{H1estimate_proof_7}
			\begin{split}
				\sum_{T \in \mathcal{T}_h^I} \|v_0 -Q_b v_0 \|_{\partial T}^2
				\leqslant Ch\sum_{T \in \mathcal{T}_h^I}\|\nabla v_0\|_T^2\leqslant Ch\frac{\beta_2^2}{\beta_1^2}\|v\|_{1,h}^2.
			\end{split}
		\end{eqnarray}
		Substituting the above two estimates \eqref{H1estimate_proof_5}-\eqref{H1estimate_proof_7} into the estimate \eqref{H1estimate_proof_4}, the estimate \eqref{H1estimate4} holds true.
		
		For $\ell_5(u,v)$, it follows from the fact that $\sum_{e \in \mathcal{E}_h^n} \langle Q_b(\beta\nabla u \cdot \bn_e) , v- Q_b v \rangle_e=0$, the Cauchy-Schwarz inequality, the trace inequality and the projection inequality \eqref{projectorestimate1} that 
		\begin{align}\label{H1estimate_proof_9}
			|\ell_5(u,v)|=&\left| \sum_{e \in \mathcal{E}_h^n} \langle \beta\nabla u \cdot \bn_e, v- Q_b v\rangle_e \right|\nonumber\\
			=&\left|\sum_{e \in \mathcal{E}_h^n} \langle \beta\nabla u \cdot \bn_e - Q_b(\beta\nabla u \cdot \bn_e), v- Q_b v\rangle_e\right|\nonumber\\
			\leqslant &C\left(\sum_{e \in \mathcal{E}_h^n} \|\beta\nabla u \cdot \bn_e - Q_b(\beta\nabla u \cdot \bn_e) \|_e^2 \right)^{\frac{1}{2}}
			\left(\sum_{e \in \mathcal{E}_h^n} \|v- Q_b v \|_e^2 \right)^{\frac{1}{2}}\\
			\leqslant &C\left(\sum_{e \in \mathcal{E}_h^n} \|\beta\nabla u \cdot \bn_e - Q_0^{k-1}(\beta\nabla u \cdot \bn_e) \|_e^2 \right)^{\frac{1}{2}}
			\left(\sum_{e \in \mathcal{E}_h^n} \|v- Q_b v \|_e^2 \right)^{\frac{1}{2}}\nonumber\\
			\leqslant & C \beta_2 h^{k}(\|u_1\|_{k+1,\Omega_1}+\|u_2\|_{k+1,\Omega_2}) \| v \|_{1,h}.\nonumber
		\end{align}
		For $\ell_6(u,v)$, using the Cauchy-Schwarz inequality, the projection inequality \eqref{projectorestimate1} and the interpolation estimate, we get
		\begin{align}
			\begin{split}
				|\ell_6(u,v)|=&\left| \sum_{T \in \mathcal{T}_h^n}(\beta \nabla(\Pi_h u-u),\nabla v)_T  \right|\\
				\leqslant &  C\beta_2 \sum_{T \in \mathcal{T}_h^n} \|\nabla(\Pi_h u-u)\|_T\|\nabla v\|_T\\
				\leqslant& C\beta_2  \left( \sum_{T \in \mathcal{T}_h^n} \|\nabla(\Pi_h u-u)\|_T^2 \right)^{\frac{1}{2}}\left(\sum_{T \in \mathcal{T}_h^n} \|\nabla v\|_T^2 \right)^{\frac{1}{2}}  \\
				\leqslant & C\beta_2 h^k (\|u_1\|_{k+1,\Omega_1}+\|u_2\|_{k+1,\Omega_2}) \| v \|_{1,h}.
			\end{split}
		\end{align}	
		For $s(Q_hu,v)$, according to the Cauchy-schwarz inequality, the definition of the $L^2$ projection operator $Q_b$, the trace inequality, and the projection inequality \eqref{projectorestimate1}, we have
		\begin{eqnarray}\label{H1estimate_proof_10}
			\begin{split}
				|s(Q_hu,v)|=& \left|\sum_{T \in \mathcal{T}_h} \langle \beta (Q_b(Q_0 u)-Q_bu), Q_b v_0 -v_b  \rangle_{\partial T} \right|\\
				\leqslant & C \beta_2 \left(\sum_{T \in \mathcal{T}_h} h_T \|Q_0 u -u \|_{\partial T}^2 \right)^{\frac{1}{2}}
				\left(\sum_{T \in \mathcal{T}_h} h_T^{-1}\| Q_b v_0 -v_b \|_{\partial T}^2 \right)^{\frac{1}{2}}\\
				\leqslant& C \beta_2 \left( \sum_{T \in \mathcal{T}_h} \|Q_0 u -u\|_T^2+ h_T^2\|\nabla (Q_0 u -u)\|_T^2
				\right) ^{\frac{1}{2}}\left(\sum_{T \in \mathcal{T}_h} h_T^{-1}\| Q_b v_0 -v_b \|_{\partial T}^2 \right)^{\frac{1}{2}}\\
				\leqslant & C \beta_2 h^{k}(\|u_1\|_{k+1,\Omega_1}+\|u_2\|_{k+1,\Omega_2})\| v \|_{1,h}.
			\end{split}
		\end{eqnarray} 
		Similarly, the estimate of $s(\widetilde{Q}_hu,v)$ can be obtained. 
		The proof of the lemma is completed.
	\end{proof}	
	


\end{document}